\numberwithin{equation}{section}
\theoremstyle{plain}
\newtheorem{thm}{Theorem}[section]
\newtheorem{prop}[thm]{Proposition}
\newtheorem{lem}[thm]{Lemma}
\newtheorem{cor}[thm]{Corollary}
\newtheorem{conj}[thm]{Conjecture}
\theoremstyle{definition}
\newtheorem{dfn}[thm]{Definition}
\newtheorem{exmp}[thm]{Example}
\newtheorem{rem}[thm]{Remark}
\newtheorem{dfns-rems}[thm]{Definitions and Remarks}
\newtheorem{notas-rems}[thm]{Notations and Remarks}
\newtheorem{exmps-rems}[thm]{Examples and Remarks}
\def\0{{\bf 0}}
\def\m{\frak{m}}
\DeclareMathOperator{\supp}{Supp}
\DeclareMathOperator{\reg}{reg}
\DeclareMathOperator{\Ann}{Ann}
\DeclareMathOperator{\soc}{soc}
\begin{document}
\title{Lefschetz properties of monomial ideals with almost linear resolution}
\author{Nasrin Altafi}
\address{Department of Mathematics, KTH Royal Institute of Technology, S-100 44 Stockholm, Sweden}
\email{nasrinar@kth.se}

\author{Navid Nemati}
\address{Institut de Math\'{e}matiques de Jussieu. UPMC, 4 Place de Jussieu, 75005 Paris, France}
\email{navid.nemati@imj-prg.fr}

\subjclass[2010]{13E10, 13D02}
\keywords{Weak Lefschetz property, monomial ideals, almost linear resolution}
\begin{abstract}
We study the WLP and SLP of artinian monomial ideals in $S=\mathbb{K}[x_1,\dots ,x_n]$ via studying their minimal free resolutions. We study the Lefschetz properties of such ideals where the minimal free resolution of $S/I$ is linear for at least $n-2$ steps. We give an affirmative answer to a conjecture of Eisenbud, Huneke and Ulrich for artinian monomial ideals with almost linear resolutions.

\end{abstract}

\maketitle

\section{Introduction}
The weak Lefschtez property (WLP) of an artinian graded algebra $A$,  says that there exists a linear form $\ell$ that induces a multiplication map $\times \ell : A_i\longrightarrow A_{i+1}$ that has maximal rank for each $i$, i.e. that is either surjective or injective. The strong Lefschetz property (SLP)  says the map $\times \ell^t : A_i\longrightarrow A_{i+t}$ has maximal rank for each $i$ and $t$. It may seem a simple problem to establish the algebras with this properties but it turns out to be rather hard to determine even for natural families of algebras. It is also interesting to ask for which $t$ the map $\times \ell^t : A_i \longrightarrow A_{i+t}$ has maximal rank (see \cite{MN}). These fundamental properties have been studied by many authors from different point of views and for different families of algebras. In this article we study the Lefschetz properties of artinian monomial  ideals generated in  a single degree $d$ with assumptions on their minimal free resolutions. 

 In \cite{EHU}, Eisenbud, Huneke and Ulrich study the minimal free resolutions of artinian ideals in the polynomial ring $S=\mathbb{K}[x_1,\dots ,x_n]$. They proved that for an artinian ideal $I\subset S$ generated in degree $d$ with the minimal free resolution with  $p-1$ linear steps, we have that 
 $$\mathfrak{m}^d\subset I+(l_p+\cdots +l_p),$$
 where $l_1, \dots ,l_p$ are linearly independent linear forms and $\mathfrak{m}=(x_1,\dots ,x_n)$ is the maximal ideal of $S$ (see  \cite[Corollary 5.2]{EHU}). They also conjecture that under this assumption,  we have that $\mathfrak{m}^d\subset I+(l_p+\cdots +l_p)^2$ for sufficiently general linear forms $l_1\dots ,l_p$ (see \cite[Conjecture 5.5]{EHU}). Note that this conjecture in the case that $p=n-1$ is equivalent to the Lefschetz property of  $S/I$. This conjecture motivated us to study the Lefschetz properties of  artinian monomial ideals in $S=\mathbb{K}[x_1,\dots ,x_n]$ generated in degree $d$ by considering some assumptions on their minimal free resolutions.

As a corollary of the result of Eisenbud, Huneke and Ulrich \cite[Corollary 5.2]{EHU} we conclude that for artinian ideal $I\subset S$ generated in degree $d$ with almost linear resolution (the minimal free resolution of $S/I$ is linear for $n-1$ steps), $S/I$ satisfies the WLP. 
In Section \ref{section3}, we study the multiplication map by higher powers of a linear form on an artinian monomial algebra $ S/I$ where the minimal free resolution of $S/I$ is linear for $n-1$ steps, see Theorem \ref{xy, full rank}. In particular, in this case we are able to give an affirmative answer to the conjecture  posed by Eisenbud, Huneke and Ulrich . In the rest of this section, we prove that an artinian monomial algebra $S/I$ satisfies the SLP, where we consider an assumption on the generators of $I$, see Theorem \ref{SLP}.
For artinian monomial ideals $I\subset S$ where the minimal free resolution of $S/I$ is linear for $n-2$ steps the WLP does  not hold necessarily. In the main theorem of Section \ref{section4}, Theorem \ref{n-2 steps}, we prove for an artinian monomial ideal $I\subset S$ generated in degree $d$, $S/I$ satisfies the WLP if the minimal free resolution of $S/I$ has $n-2$ linear steps and $\m^{d+1}\subset I$.  The last assumption is equivalent to say Castelnuovo-Mumford regularity of $I$, $\reg(S/I)$, is $d$. Observe that the assumption on the Castelnuovo-Mumford regularity of $I$ is essential. In the polynomial ring with three variables the assumption of having $n-2$ linear steps in the minimal free resolution of $S/I$ is always fulfilled whenever $I$ is generated in a single degree, but  the Togliatti system defined by artinian monomial ideal $I=(x^3_1,x^3_2,x^3_3,x_1x_2x_3)$ fails the WLP.

\section{Preliminaries}
We consider standard graded algebra $S/I= \oplus_{i\geq 0} (S/I)_i$, where $S = \mathbb{K}[x_1,\dots , x_n]$ is a polynomial ring over a filed of characteristic zero and all $x_i$'s have degree $1$ and $I\subset S$ is an artinian homogeneous ideal generated in a single degree $d$. Let us define the weak and strong Lefschetz properties for artinian algebras.
\begin{dfn}
Let $I\subset S$ be an artinian homogeneous ideal. We say that $S/I$ has the \textit{weak Lefschetz property} (WLP) if there is a linear form $\ell \in (S/I)_1$ such that, for all integers $j$, the multiplication map 
$$
\times \ell : (S/I)_j\longrightarrow (S/I)_{j+1}
$$
has maximal rank, i.e. it is injective or surjective. In this case the linear form $\ell$ is called a \textit{Lefschetz element} of $S/I$. If for the general form $\ell \in (S/I)_1$ and for an integer number $j$ the map $\times \ell$ does not have the maximal rank we will say that the ideal $I$ fails the WLP in degree $j$. \\
We say that $S/I$ has the \textit{strong Lefschetz property} (SLP) if there is a linear form $\ell\in(S/I)_1$ such that, for all integers $j$ and $k$ the multiplication map 
$$
\times \ell^k : (S/I)_j\longrightarrow (S/I)_{j+k}
$$ has the maximal rank, i.e. it is injective or surjective. By abusing the notation we often say that $I$ satisfies or fails the SLP or WLP.
\end{dfn}
In the case of one variable, the WLP and SLP trivially hold since all ideals are principal. The case of two variables there is  a nice result in characteristic zero by Harima, Migliore, Nagel and Watanabe \cite[Proposition 4.4]{HMNW}.
\begin{prop}\label{two}
Every artinian ideal $I\subset \mathbb{K}[x,y]$ where $\mathbb{K}$ has characteristic zero, has the Strong Lefschetz property (and consequently also the Weak Lefschetz property). 
\end{prop}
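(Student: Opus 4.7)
The plan is to show that a sufficiently general linear form $\ell \in S_1$ is a strong Lefschetz element for $A := S/I$. Write $h_j := \dim_{\mathbb{K}} A_j$.

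First, I would establish the shape of the Hilbert function $(h_j)$. In two variables one has $\dim_{\mathbb{K}} S_j = j+1$, and Macaulay's bound forces $h_{j+1} \leq h_j + 1$ as long as $h_j \leq j$. Combined with the artinian hypothesis, this makes $(h_j)$ strictly increase by one from $h_0 = 1$ up to a peak at some degree $c$ (with $h_c \leq c+1$), and then to be non-increasing thereafter.

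Next, I would prove the WLP as a warm-up. From the four-term exact sequence
\begin{equation*}
0 \to \bigl((I : \ell)/I\bigr)(-1) \to A \xrightarrow{\cdot \ell} A \to A/\ell A \to 0,
\end{equation*}
and the observation that $A/\ell A$ is a graded quotient of $S/(\ell) \cong \mathbb{K}[t]$ by an artinian ideal (so each piece $(A/\ell A)_j$ has dimension at most $1$), a degree-by-degree dimension count together with the unimodal shape of $(h_j)$ forces $\times \ell : A_j \to A_{j+1}$ to have maximal rank in every degree, for generic $\ell$.

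To upgrade WLP to SLP, I would write $\times \ell^k$ as the $k$-fold iterate of $\times \ell$ and split according to the position of $[j, j+k]$ relative to the peak $c$. If $j+k \leq c$, each intermediate $\times \ell$ is injective by WLP, so $\times \ell^k$ is injective of rank $h_j = \min(h_j, h_{j+k})$. If $j \geq c$, each intermediate $\times \ell$ is surjective, so the composition is surjective of rank $h_{j+k}$.

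The main obstacle is the case $j < c < j+k$, where $\times \ell^k$ crosses the peak: one must show that the image $\ell^{c-j}(A_j)$ inside $A_c$ (a subspace of dimension $h_j$) meets the kernel of $\ell^{j+k-c}: A_c \to A_{j+k}$ (a subspace of dimension $h_c - h_{j+k}$) in the smallest possible dimension, namely $\max(0, h_j - h_{j+k})$. This general-position property defines a Zariski-open subset of $S_1$, and its non-emptiness is verified in characteristic zero by exhibiting a single favourable $\ell$. One standard way is to degenerate $A$ through a flat family preserving the Hilbert function to a monomial complete intersection $\mathbb{K}[x,y]/(x^a, y^b)$, where SLP is Stanley's classical theorem proved via the $\mathfrak{sl}_2$ representation on the corresponding graded algebra, and then conclude by the lower semicontinuity of the rank of $\times \ell^k$ in such families.
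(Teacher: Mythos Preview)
The paper does not prove this proposition; it is quoted verbatim from Harima--Migliore--Nagel--Watanabe, so there is no in-paper argument to compare against and your outline must stand on its own.

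Your WLP portion is essentially correct: for generic $\ell$ the quotient $A/\ell A$ is a graded artinian quotient of $\mathbb{K}[t]$, hence isomorphic to $\mathbb{K}[t]/(t^m)$, and identifying $m$ with the initial degree of $I$ gives maximal rank everywhere. The SLP portion, however, has a genuine gap. You assert that $A$ can be degenerated, through a flat family \emph{preserving the Hilbert function}, to a monomial complete intersection $\mathbb{K}[x,y]/(x^a,y^b)$, and then invoke Stanley's theorem there. But such a special fibre need not exist: the Hilbert function of $\mathbb{K}[x,y]/(x^a,y^b)$ is always symmetric, whereas an arbitrary artinian quotient of $\mathbb{K}[x,y]$ can have a non-symmetric Hilbert function. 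For example $I=(x^3,\,x^2y,\,xy^2,\,y^4)$ gives $H_{S/I}=(1,2,3,1)$, which is not the Hilbert function of any $(x^a,y^b)$. Hence the degeneration you rely on simply does not exist in general, and the appeal to Stanley's theorem is empty. A correct variant of your idea is to degenerate to an initial ideal $\mathrm{in}(I)$ (or $\mathrm{gin}(I)$), which is an artinian \emph{monomial} ideal with the same Hilbert function; but then you must establish SLP for \emph{all} artinian monomial ideals in two variables, which is true yet requires its own argument and is not covered by Stanley's complete-intersection result. Only after that is in hand does the semicontinuity step you describe legitimately transfer SLP back to $A$.
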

In \cite[Proposition 2.2]{MMN}, Migliore, Mir\'o-Roig and Nagel by using the action of a torus on monomial algebras provide the existence of the canonical Lefschetz element.
\begin{prop}\cite[Proposition 2.2]{MMN}\label{mon} Let $I\subset S=\mathbb{K}[x_1,\dots ,x_n]$ be an artinian monomial ideal. Then $S/I$ has the weak Lefschetz property if and only if $x_1+x_2+\cdots +x_n$ is a Lefschetz element for $S/I$.
\end{prop}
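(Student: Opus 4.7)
The ``if'' direction is immediate from the definition: if $x_1+\cdots+x_n$ is a Lefschetz element then $S/I$ has WLP by definition. So the content is the converse, and the plan is to deduce it by exploiting the natural torus action on a monomial quotient.

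My first step is to set up the torus action. Let $T=(\mathbb{K}^*)^n$ act on $S$ by $t\cdot x_i = t_ix_i$. Because $I$ is monomial, it is stable under this action, so $T$ acts on $S/I$ by graded $\mathbb{K}$-algebra automorphisms. For any $t\in T$ and any linear form $\ell=\sum a_ix_i$, the automorphism of $S/I$ induced by $t$ intertwines multiplication by $\ell$ with multiplication by $t\cdot\ell=\sum t_ia_ix_i$. Consequently, for every degree $j$ and every $t\in T$, the map $\times\ell:(S/I)_j\to(S/I)_{j+1}$ has maximal rank if and only if $\times(t\cdot\ell):(S/I)_j\to(S/I)_{j+1}$ has maximal rank. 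In particular, the set
\[
U=\{\ell\in(S/I)_1 \,:\, \ell \text{ is a Lefschetz element of }S/I\}
\]
is stable under the $T$-action.

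The second step is to observe that $U$ is Zariski open in the affine space $(S/I)_1\cong\mathbb{K}^n$. For a fixed $j$, the condition that $\times\ell:(S/I)_j\to(S/I)_{j+1}$ has maximal rank is the non-vanishing of an appropriate set of maximal minors of a matrix whose entries depend linearly on the coefficients of $\ell$; this is a Zariski open condition on $\ell$. Since $S/I$ is artinian there are only finitely many degrees $j$ with $(S/I)_j\neq 0$, so $U$ is a finite intersection of Zariski open subsets, hence open.

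Now I combine the two ingredients. Suppose $S/I$ has the WLP, so $U$ is non-empty. Being a non-empty open subset of the irreducible space $\mathbb{K}^n$, $U$ is dense; in particular it meets the torus orbit consisting of linear forms $\sum c_ix_i$ with every $c_i\neq 0$. Pick such an $\ell_0=\sum c_ix_i\in U$, and choose $t\in T$ with $t_i=c_i^{-1}$. Then $t\cdot\ell_0=x_1+\cdots+x_n$, and by $T$-invariance of $U$ we conclude that $x_1+\cdots+x_n\in U$, i.e.\ it is a Lefschetz element. The only subtle point is the density/openness argument; the torus-equivariance and the reduction to maximal minors are both essentially formal.
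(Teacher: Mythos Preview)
Your argument is correct and follows exactly the torus-action strategy that the paper attributes to Migliore, Mir\'o-Roig and Nagel; the paper does not give its own proof here but merely cites \cite[Proposition~2.2]{MMN}, so there is nothing further to compare.
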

\begin{rem}\label{Surjective}
Let multiplication map $\times \ell^c : (S/I)_{a-c}\rightarrow (S/I)_{a}$ where $I$ is an  ideal of $S$ generated in degree $d$. If  $H_{S/I}(d)\leq H_{S/I}(d-c)$, then  $\times \ell^c : (S/I){a-c}\rightarrow (S/I)_{a}$ has maximal rank for every $a$ if and only if it is surjective for $a=d$. In fact if the multiplication map  $\times \ell^c :(S/I)_{a-c}\rightarrow (S/I)_a$  is surjective we have that $\left [(S/I)/ \ell^c(S/I)\right ]_d=0$ therefore $\left [(S/I)/ \ell^c(S/I)\right ]_k$ for all $k\geq d$ and so   $\times \ell^c :(S/I)_{k-c}\rightarrow (S/I)_k$ is surjective for each $k\geq d$. On the other hand,  since $I$ is generated in degree $d$, the multiplication map by $\ell^c$ is injective in the degrees less than $d-c$ hence $S/I$.
\end{rem}
\subsection*{Minimal free resolution and a conjecture of Eisenbud, Huneke and Ulrich}
Let $S=\mathbb{K}[x_1, \dots , x_n]$ be a polynomial ring over a field $\mathbb{K}$ and $M$ be a finitely generated $S$-module. A minimal free resolution of $M$ is an exact sequence
\begin{center}
$0\rightarrow F_p\rightarrow F_{p-1}\rightarrow \cdots \rightarrow F_0\rightarrow M \rightarrow 0, $
\end{center}
where each $F_i$ is a graded $S$-free module of the form $F_i=\oplus S(-j)^{\beta_{i,j}(M)} $ such that the number of basis elements is minimal and each map is graded. The value ${\beta_{i,j}(M)}$ is called the $i$-th graded Betti numbers of $M$ of degree $j$. Note that the minimal free resolution of $M$ is unique up to isomorphism so the graded Betti numbers are uniquely determined. 
\begin{dfn}
Let $M$ be a finitely generated graded $S=k[x_1,\dots,x_n]$-module, then
$$
\reg(M):= \max_{i,j}\lbrace i-j \mid\beta_{i,j}(M)\neq 0\rbrace.
$$ 
Note that if $I$ is an artinian ideal, then $\reg(S/I)= \max_{i}\lbrace i\mid \m^{i+1}\subset I\rbrace$.
\end{dfn}
\begin{dfn}
Let $I\subset S= \mathbb{K}[x_1,\dots ,x_n]$ be an ideal of $S$ generated in degree $d$. We say that the minimal free resolution of $S/I$ is \textit{linear} for $r$ steps if $\beta_{i,i+j}(S/I)=0$, for all $1\leq i\leq r$ and all $j\geq d$. We say $S/I$ has  \textit{linear resolution} if $r=n$, and it has \textit{almost linear resolution} if $r=n-1$. 
\end{dfn}
Eisenbud, Huneke and Ulrich In  \cite{EHU} studied artinian  ideals in polynomial rings. They prove the following result:
\begin{thm}\cite[Corollary 5.2]{EHU}\label{thm- EHU}
Let $I\subset S$ be an artinian ideal generated in degree $d$ and $\mathfrak{m}=(x_1,\dots ,x_n)$. If the minimal free resolution of $I$ is linear for $p-1$ steps, then 
\begin{equation*}
\mathfrak{m}^d\subseteq I +(l_p, \dots , l_n)
\end{equation*} 
for linearly independent linear forms $l_p,l_{p+1},\dots ,l_n$. 
\end{thm}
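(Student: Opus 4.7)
My plan is to reformulate the desired containment as the surjectivity of a multiplication map, and then derive that surjectivity from the linearity assumption via a Koszul-homology comparison together with a genericity argument. Throughout I assume $\mathbb{K}$ is infinite so that Zariski-generic linear forms are available.

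\textbf{Step 1 (Reformulation).} I would first observe that for linearly independent $l_p, \ldots, l_n \in S_1$, setting $V := \langle l_p, \ldots, l_n \rangle$, $\bar S := S/(V) \cong \mathbb{K}[y_1, \ldots, y_{p-1}]$, and $\bar I$ the image of $I$ in $\bar S$, the containment $\mathfrak{m}^d \subseteq I + (V)$ is equivalent to $(\bar S/\bar I)_d = 0$. Indeed, $\bar I$ is generated in degree $d$, so $(\bar S/\bar I)_d = 0$ forces $\bar I_d = \bar S_d$ and hence $\bar I \supseteq \mathfrak{m}_{\bar S}^d$, from which the containment in all higher degrees follows. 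Computing $\bar S/\bar I = H_0\bigl(K_\bullet(V;\, S/I)\bigr)$ from the Koszul complex, this vanishing is equivalent to the surjectivity of the multiplication map
\[
\mu_V : (S/I)_{d-1} \otimes_{\mathbb{K}} V \longrightarrow (S/I)_d .
\]

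\textbf{Step 2 (From linearity to surjectivity of $\mu_V$).} The assumption that the minimal free resolution of $S/I$ is linear for $p-1$ steps translates into $\Tor_i^S(S/I, \mathbb{K})_{i+j} = 0$ for all $1 \leq i \leq p-1$ and $j \geq d$. I would then compare the full Koszul complex $K_\bullet(S_1; S/I)$, whose homology computes these Tors, with the restricted Koszul complex $K_\bullet(V; S/I)$, whose $H_0$ in degree $d$ is $\coker(\mu_V^{(d)})$. Since $V$ is spanned by a regular sequence on $S$ for generic choice, $K_\bullet(V; S)$ resolves $\bar S$ over $S$, and a change-of-rings spectral sequence identifies the homologies of $K_\bullet(V; S/I)$ with $\Tor_i^S(S/I, \bar S)$. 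The vanishing of $\Tor_i^S(S/I, \mathbb{K})$ in the non-linear range for $i \leq p-1$ then constrains these Tors enough that, by a degree-by-degree chase through the Koszul complex, the cokernel of $\mu_V$ in degree $d$ must vanish for a generic choice of $V$.

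\textbf{Main obstacle.} I expect the principal obstacle to be the last step: the passage from the homological linearity condition to concrete surjectivity of $\mu_V$ for generic $V$ is not formal, since for special choices of $V$ the map $\mu_V$ can genuinely fail to be surjective (this is why genericity is essential in the statement). The argument must identify the failure locus of $\mu_V$ as a proper Zariski-closed subset of the Grassmannian of $(n-p+1)$-dimensional subspaces of $S_1$, and the dimension count needed to do so is the substantive content of Eisenbud, Huneke, and Ulrich's Theorem~5.1, from which the stated corollary is deduced.
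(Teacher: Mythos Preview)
The paper does not prove this theorem; it is quoted from \cite[Corollary~5.2]{EHU} and used as a black box, so there is no in-paper argument to compare your sketch against. On the substance of your proposal, two points.

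First, you have misread the quantifier. The Remark immediately after Conjecture~\ref{conjecture} states explicitly that Theorem~\ref{thm- EHU} holds for \emph{any} set of linearly independent linear forms $l_p,\dots,l_n$; genericity is required only for the conjecture. Your ``Main obstacle'' paragraph, which worries about a Zariski failure locus for $\mu_V$, is therefore based on a misconception: under the hypotheses there is no such failure locus, and after a linear change of coordinates one may simply take $l_i=x_i$ for $p\le i\le n$. Your genericity/Grassmannian strategy is aimed at a strictly weaker statement than the one being asserted.

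Second, Step~1 (the reformulation of $\mathfrak{m}^d\subseteq I+(V)$ as surjectivity of $\mu_V$ in degree $d$) is correct and standard. Step~2, however, is not a proof: invoking ``a degree-by-degree chase through the Koszul complex'' together with a change-of-rings spectral sequence does not, as written, produce the vanishing of $\coker(\mu_V)_d$, and you yourself concede that the ``substantive content'' lies in EHU's Theorem~5.1. As it stands the proposal is a restatement of the problem plus a pointer back to the original source rather than an independent argument.
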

The above result says that in terms of the minimal free resolution with these assumption we have $\reg( I +(l_p, \dots , l_n))\leq d$. They also conjecture that under the same assumptions as Theorem \ref{thm- EHU},  $\reg( I +(l_p, \dots , l_n)^2)\leq d$ where $l_p,\dots ,l_n$ are sufficiently general linear forms.
\begin{conj}\cite[Conjecture 5.4]{EHU}\label{conjecture}
Suppose $I\subset S$ is artinian ideal generated in degree $d$ and its minimal free resolution is linear for $p-1$ steps then 
\begin{equation*}
\mathfrak{m}^d\subseteq I +(l_p,,l_{p+1}, \dots , l_n)^2
\end{equation*} for sufficiently general linear forms $l_p,\dots ,l_n$. 
\end{conj}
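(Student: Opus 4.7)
The plan is to prove this conjecture in the case that $I$ is an Artinian monomial ideal with almost linear resolution, that is, $p=n$; the general statement I do not see how to attack directly. In that case the target ideal $(l_p,\ldots,l_n)^2$ collapses to the principal ideal $(\ell^2)$ for a single linear form $\ell$, and unwinding definitions shows that $\mathfrak{m}^d\subseteq I+(\ell^2)$ is equivalent to the multiplication map
\[
\times\ell^2 : (S/I)_{d-2}\longrightarrow (S/I)_d
\]
being surjective. This is a Lefschetz-type statement at exponent $2$, one step deeper than the WLP that is an immediate corollary of Theorem~\ref{thm- EHU}.

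First I would dispose of the genericity: since $I$ is monomial, Proposition~\ref{mon} (or its analogue for $\ell^2$, obtained by the same torus-action argument of \cite{MMN}) reduces the ``sufficiently general $\ell$'' of the conjecture to the canonical choice $\ell=x_1+\cdots+x_n$. Next, Theorem~\ref{thm- EHU} applied with $p-1=n-1$ linear steps already yields $\mathfrak{m}^d\subseteq I+(\ell)$, equivalently the surjectivity of $\times\ell:(S/I)_{d-1}\to(S/I)_d$. What remains, and is the real content of the conjecture, is to upgrade this to surjectivity of $\times\ell^2$ starting one degree earlier; this is \emph{not} formal from WLP, since $\times\ell:(S/I)_{d-2}\to(S/I)_{d-1}$ is typically far from surjective on its own (its image has dimension only $H_S(d-2)$ inside $H_S(d-1)$).

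The main obstacle is precisely this upgrade. The almost-linear hypothesis must be used to control the minimal generators of $I$: the vanishing of Betti numbers off the linear strand up to homological degree $n-1$ imposes a rigid shape on the monomial generators of $I$, and this rigidity is what should allow both an explicit dimension count showing $H_{S/I}(d-2)\ge H_{S/I}(d)$ and an exhibition of every monomial of degree $d$ outside $I$ as $\ell^2$ times some degree--$(d-2)$ element modulo $I$. A natural tactic is to work with the Artinian reduction $S/(I,\ell)$ and analyze, one monomial at a time, how an arbitrary degree--$d$ monomial gets absorbed into $I$ after multiplication by coefficients determined by the expansion of $\ell^2$. For general (non-monomial) $I$, or for $p<n$ where $(l_p,\ldots,l_n)^2$ is genuinely non-principal, one loses both the canonical Lefschetz element and the direct combinatorial handle on generators, which is why the full conjecture of Eisenbud--Huneke--Ulrich remains open.
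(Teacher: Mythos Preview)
Your reduction is correct: for $p=n$ the conjecture becomes surjectivity of $\times\ell^2:(S/I)_{d-2}\to(S/I)_d$ for a general linear form $\ell$, and you rightly note this is not formal from the WLP. But the proposal stops exactly where the argument has to begin; two concrete ideas are missing.

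First, the almost-linear hypothesis has a sharp combinatorial consequence you never extract: every monomial $m\in(S/I)_d$ is divisible by the full product $x_1x_2\cdots x_n$. This is Proposition~\ref{cobasis almost linear}, proved by a short syzygy argument on the inverse system---if some cobasis monomial $m$ missed a variable, one could manufacture a nonlinear syzygy contradicting linearity at step $n-1$. Your phrase ``rigid shape on the monomial generators'' points in the right direction but does not arrive at this statement, and without it neither the dimension count $H_{S/I}(d)\le H_{S/I}(d-2)$ nor the monomial-by-monomial analysis you sketch has anything to grip.

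Second, and this is the decisive trick, the paper does \emph{not} take $\ell=x_1+\cdots+x_n$. Once one knows $x_ix_j\mid m$ for every $m\in(S/I)_d$ and any fixed pair $i<j$, the paper chooses $\ell=x_i+x_j$. The cobasis monomials then partition into disjoint line segments in the $(x_i,x_j)$ direction (Lemma~\ref{MaximalSegment}), and on each segment the restriction of $\times(x_i+x_j)^2$ is an explicit Toeplitz matrix whose invertibility is precisely the SLP of a two-variable complete intersection (Lemma~\ref{Toeplitz, lemma} together with Proposition~\ref{two}). Surjectivity for this particular $\ell$ then gives the conjecture for general $\ell$ by semicontinuity. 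Your proposed route through the full linear form and the reduction $S/(I,\ell)$ does not visibly reduce to anything one can compute; the two-variable collapse is what makes the problem finite.
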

\begin{rem}
Note that Theorem \ref{thm- EHU} holds for any set of  linearly independent linear forms $l_p,l_{p+1}\dots ,l_n$ but this is not the case necessarily for Conjecture \ref{conjecture}. For instance let $S=\mathbb{K}[x,y,z]$ and   $I=(x^3, y^3, z^3, xy^2, x^2y, xz^2, x^2z, y^2z, yz^2)$. The minimal free  resolution of  $I$ is as follows:
$$
0\rightarrow S(-5)^3\oplus S(-6)\rightarrow S(-4)^{12}\rightarrow S(-3)^9\rightarrow S\rightarrow 0
$$
and $I$ has  almost linear resolution. By Theorem \ref{thm- EHU} we have
$$
\mathfrak{m}^3\subseteq I+ (x).
$$
The statement of Conjecture \ref{conjecture} does not hold for the linear form $l=x$ since we have that $xyz\notin I+(x^2)$, $\mathfrak{m}^3\nsubseteq I+(x)^2$. But if $l= x-y$, one can check that $\mathfrak{m}^3\subseteq I+(l)^2$ and the Conjecture \ref{conjecture} holds in this case. Thus we need to consider sufficiently general linear forms in the conjecture.
\end{rem}
As a consequence of Theorem \ref{thm- EHU} we have:
\begin{cor}\label{almost wlp}
If $I\subset S$ is an artinian ideal generated in degree $d$ with almost linear resolution, then $S/I$ satisfies the WLP.
\end{cor}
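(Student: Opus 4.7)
The plan is to invoke Theorem \ref{thm- EHU} at the borderline value $p = n$ and then verify that the linear form it produces is a Lefschetz element for $S/I$. The almost linear resolution hypothesis means the resolution is linear for $n-1$ steps, so Theorem \ref{thm- EHU} furnishes a linear form $\ell$ with $\mathfrak{m}^d \subseteq I + (\ell)$. I claim $\ell$ is a Lefschetz element, and the argument splits into verifying maximal rank of $\times \ell$ in each degree according to whether we are above or below the generation degree $d$.

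First I would handle the degrees $j \geq d-1$. The containment $\mathfrak{m}^d \subseteq I + (\ell)$ translates exactly to the statement that every class in $(S/I)_d$ is of the form $\ell \cdot \overline{g}$ for some $\overline{g} \in (S/I)_{d-1}$, so $\times \ell : (S/I)_{d-1} \to (S/I)_d$ is surjective. For any $k \geq d$ the same inclusion $\mathfrak{m}^k \subseteq \mathfrak{m}^d \subseteq I + (\ell)$ shows $\times \ell : (S/I)_{k-1} \to (S/I)_k$ is surjective; this is essentially the content of Remark \ref{Surjective} in the case $c = 1$.

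Next I would handle the degrees $j \leq d - 2$. Because $I$ is generated in degree $d$, in these degrees we have $(S/I)_j = S_j$ and $(S/I)_{j+1} = S_{j+1}$, so the multiplication map is just multiplication by $\ell$ in the polynomial ring $S$, which is injective for any nonzero linear form. Combining this with the previous step, $\times \ell$ has maximal rank in every degree, which is exactly the WLP.

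There is no real obstacle here; everything reduces to correctly reading off $p = n$ in Theorem \ref{thm- EHU} and observing that surjectivity of $\times \ell$ at the single critical step $(d-1) \to d$ is precisely what the inclusion $\mathfrak{m}^d \subseteq I + (\ell)$ asserts. The only thing worth flagging is that the dimensions automatically satisfy $H_{S/I}(d) \leq H_{S/I}(d-1)$ as a consequence of that surjectivity, so no separate check on the Hilbert function is needed before applying Remark \ref{Surjective}.
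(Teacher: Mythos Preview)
Your proof is correct and follows essentially the same route as the paper: invoke Theorem~\ref{thm- EHU} with $p=n$ to get $\mathfrak{m}^d\subseteq I+(\ell)$, read this as surjectivity of $\times\ell$ in degree $d-1$, and then appeal to Remark~\ref{Surjective} (which you have effectively unpacked by splitting into degrees below and above $d$). The only cosmetic difference is that the paper compresses your two cases into a single citation of Remark~\ref{Surjective}.
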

\begin{proof}
Since the minimal free resolution of $I$ is linear for $n-1$ steps, Theorem \ref{thm- EHU} implies that for a general linear form $\ell$,  we have that $\mathfrak{m}^d \subset I+ (\ell)$. This is equivalent to have the surjective map $\times \ell :(S/I)_{d-1}\rightarrow (S/I)_d$ and the assertion follows from Remark \ref{Surjective}.
\end{proof}
\subsection*{Macaulay inverse systems}
Let us now recall some facts of the theory of the \textit{inverse system}, or \textit{Macaulay duality}, which will be a fundamental tool in this paper. For a complete introduction, we refer the reader to \cite{G} and \cite{IK}.

Let $R= \mathbb{K}[y_1,\dots ,y_n]$, and consider $R$ as a graded $S$-module where the action of $x_i$ on $R$ is partial differentiation with respect to $y_i$.\\
There is a one-to-one correspondence between graded artinian algebras $S/I$ and finitely generated graded $S$-submodules $M$ of $R$, where $I=\Ann_S(M)$  is the annihilator of $M$ in $S$ and, conversely, $M=I^{-1}$ is the $S$-submodule of $R$ which is annihilated by $I$ (cf. \cite[Remark 1]{G}), p.17).   
By duality, the map $\circ \ell : R_{i+1}\longrightarrow R_i$ is dual to the map  $\times \ell : (S/I)_{i}\longrightarrow (S/I)_{i+1}$. So the injectivity(resp. surjectivity) of the first map is equivalent to the surjectivity (resp. injectivity) of the second one. Here by $"\circ \ell"$ we mean the linear form $\ell$ acts on $R$.

The inverse system module $I^{-1}$ of an ideal $I$ is generated by monomials in $R$ if and only if $I$ is a monomial ideal in $S$. 
\section{Lefschetz properties of monomial ideals with $n-1$ linear steps }\label{section3}
The goal of this section is to give an affirmative answer to the  Conjecture \ref{conjecture} in the case of monomial ideals with almost linear resolution.\\
Let $ I\subset S$ be an artinian monomial ideal generated in degree $d$, in the following proposition we provide an upper bound for the Hilbert function  $H_{S/I}(d):=\dim_k(S/I)_d$ in terms of the number of linear steps in the minimal free resolution of $S/I$.
\begin{prop}\label{cobasis almost linear}
Let $I\subset S = \mathbb{K}[x_1,\dots  ,x_n]$ be an artinian monomial ideal generated in degree $d$. If the minimal free resolution of $I$ is linear for $r$ steps, then for every monomial $m\in Mon(S/I)_d$ we have $|\supp(m)|\geq r+1$. In particular, 
$$H_{S/I}(d)\leq \binom{n}{r+1}H_{S/I}(d-r-1) .$$
\end{prop}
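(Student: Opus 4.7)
The plan is to apply Theorem \ref{thm- EHU} several times, with different choices of linearly independent linear forms. Since $I$ is monomial and (by the remark following Conjecture \ref{conjecture}) Theorem \ref{thm- EHU} remains valid for \emph{any} choice of linearly independent linear forms, I can take them to be the variables themselves. With $r$ linear steps we have $p = r+1$ in the notation of Theorem \ref{thm- EHU}, yielding $n-r$ linear forms, and the theorem then delivers
$$\m^d \subseteq I + (x_j : j \in T)$$
for every subset $T \subseteq \{1, \ldots, n\}$ of cardinality $n-r$.

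Next I would fix $m \in Mon(S/I)_d$ and use that $I + (x_j : j \in T)$ is itself a monomial ideal: since $m \notin I$, the inclusion forces $x_j \mid m$ for some $j \in T$, i.e.\ $\supp(m) \cap T \neq \emptyset$. As this holds for every size-$(n-r)$ subset $T$, the set $\supp(m)^c$ cannot contain any such $T$; hence $|\supp(m)^c| \leq n - r - 1$, which rearranges to $|\supp(m)| \geq r+1$.

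The Hilbert function inequality then follows by a direct count. Every $m \in Mon(S/I)_d$ admits a factorization $m = \sigma \cdot m'$, where $\sigma$ is a squarefree monomial of degree $r+1$ built from $r+1$ variables in $\supp(m)$ (possible by the first part) and $m' = m/\sigma$ has degree $d-r-1$. Since $m' \mid m$ and $I$ is monomial, $m' \notin I$, so $m' \in Mon(S/I)_{d-r-1}$. Fixing a canonical choice of $\sigma$ for each $m$ (for instance, the product of the smallest $r+1$ variable indices in $\supp(m)$) produces an injection $m \mapsto (\sigma, m')$ into the product of the $\binom{n}{r+1}$ squarefree monomials of degree $r+1$ with $Mon(S/I)_{d-r-1}$, yielding the stated bound. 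The only point that requires care is the very first step—recognizing that Theorem \ref{thm- EHU} may be invoked for \emph{arbitrary} sets of linearly independent linear forms so that, in the monomial setting, one may specialize them to variables and let the variable subset range over all $\binom{n}{n-r}$ possibilities; after that, the argument reduces to the elementary combinatorics above.
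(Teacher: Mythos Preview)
Your proof is correct and takes a genuinely different route from the paper's. The paper argues directly via Macaulay duality: given a monomial $m \in (S/I)_d$ with $|\supp(m)| \le r$, it exhibits, in the resolution of the inverse system $I^{-1}$, a chain of successive linear syzygies obtained by acting with the $n-r$ variables missing from $\supp(m)$; dualizing produces a nonzero Betti number $\beta_{r,\,j}(S/I)$ with $j \ge d+r$, contradicting the linearity hypothesis. This is an intrinsic, self-contained syzygy construction that does not invoke Theorem~\ref{thm- EHU} at all.

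Your argument, by contrast, treats Theorem~\ref{thm- EHU} as a black box and exploits the observation (recorded in the remark after Conjecture~\ref{conjecture}) that it holds for \emph{arbitrary} linearly independent linear forms, hence in particular for any $(n-r)$-element subset of the variables. The combinatorial step---that $\supp(m)$ meets every $(n-r)$-subset of $\{1,\dots,n\}$ forces $|\supp(m)| \ge r+1$---is then immediate, and your injection $m \mapsto (\sigma, m/\sigma)$ cleanly yields the Hilbert function bound (which the paper leaves implicit). Your approach is shorter and more transparent, at the cost of importing the full strength of the Eisenbud--Huneke--Ulrich result; the paper's approach is more elementary in that it builds the obstruction by hand, and it makes visible exactly which syzygies witness the failure of linearity.
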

\begin{proof}
Let 
 $$
 0\rightarrow F_n\xrightarrow{\varphi_{n}}F_{n-1}\xrightarrow{\varphi_{n-1}}\cdots F_1\xrightarrow{\varphi_1} F_0\xrightarrow{\varphi_0} 0
 $$
 be the minimal free resolution of $I^{-1}$ which is dual to the minimal free 
 resolution of $S/I$.  By Macaulay duality we can consider $m$ as an 
 element in $(I^{-1})_d$, there exists generator $m'\in I^{-1}$ such that 
 $m= h\circ m' $ for some $h\in S$. Suppose $|\supp (m)|\leq r$ and variables $y_1,\dots,y_{n- r}\notin \supp(m)$. If $\varphi_1(e_1)= m'$ for A basis element $e_1$ of $F_1$, then  $L_1 := (x_1h)\circ e_1$ is a first syzygy of $I^{-1}$. Therefore, it corresponds to a basis element of $F_2$, say $e_2$. Observe that,  $L_2 := x_2 \circ e_2$ is a second syzygy and it corresponds to a basis element of $F_3$. By continuing this procedure $n-r$ times, we  find a basis element for $F_{n-r}$ of degree higher than $d-n-r$. using the duality of the minimal free resolution of $I^{-1}$ and $S/I$, we get $\beta_{r, d+r}(S/I)\neq 0$ which contradicts the fact that the minimal free resolution of $S/I$ is linear for $r$ steps.
\end{proof}
\begin{rem}
In \cite[Proposition 11.1]{EHU} Eisenbud, Huneke and Ulrich find a lower bound for the number of generators of an ideal with almost linear resolution where the bound implies that  $H_{S/I}(d)\leq H_{S/I}(d-2)$. By Proposition \ref{cobasis almost linear} for a monomial ideal $I$ with almost linear resolution,  $H_{S/I}(d)\leq H_{S/I}(d-n)$.
\end{rem}
Let us define a specific class of well-known matrices with non-negative integer entries:
\begin{dfn}\label{Toeplitz definition}
For integers $n,m,k$ where $m\geq 1$, we define the following Toeplitz matrix $T_{n,m,k}$ as the $m\times m$ matrix 
$$T_{n,m,k} := 
\begin{bmatrix}
\binom{n}{k} & \binom{n}{k+1}   &  \binom{n}{k+2} & \cdots & \binom{n}{n}& 0& \cdots  & 0 \\\\
\binom{n}{k-1} &  \binom{n}{k}   &  \binom{n}{k+1}& \cdots  & \binom{n}{n-1}& \binom{n}{n} &\cdots  & 0 \\\\
\vdots&  \vdots &  \vdots & \vdots & \vdots  &\vdots&\vdots &\vdots  \\\\  

0 &  0   &0 &  \cdots &\binom{n}{k-3} & \binom{n}{k-2}  & \binom{n}{k-1} & \binom{n}{k}    
\end{bmatrix}
$$
where the $(i,j)^{th}$ entry of this matrix is $\binom{n}{k+j-i}$ and we use the convention that $\binom{n}{i}=0$
for $i\leq 0$ and $i>n$.
\end{dfn}
Determining the rank of such matrices is an open problem even in many specific cases. Here using the fact that any monomial algebra in the polynomial ring with two variables  has the SLP we show that Toeplitz matrix $T_{n,m,k}$ has maximal rank.
\begin{lem}\label{Toeplitz, lemma}
For every integers $0\leq k\leq n$ and $m\geq 1$, Toeplitz matrix $T_{n,m,k}$  is invertible.
\end{lem}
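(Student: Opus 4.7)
The plan is to exhibit $T_{n,m,k}$ as the matrix, in monomial bases, of a multiplication map $\times (x+y)^n$ on a carefully chosen two-variable artinian monomial algebra, and then to invoke Proposition~\ref{two}.

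Take $A := \mathbb{K}[x,y]/(x^{k+m}, y^{n-k+m})$. A direct count of monomials shows that $\dim_{\mathbb{K}} A_{m-1} = \dim_{\mathbb{K}} A_{m-1+n} = m$, with monomial bases
\[
  \{x^{m-1-c} y^c : 0 \le c \le m-1\} \quad\text{and}\quad \{x^{m-1+n-\beta} y^\beta : n-k \le \beta \le n-k+m-1\}.
\]
Expanding
\[
  (x+y)^n \cdot x^{m-1-c} y^c \;=\; \sum_{l=0}^{n} \binom{n}{l}\, x^{m-1-c+n-l} y^{c+l},
\]
the coefficient of $x^{m-1+n-\beta} y^\beta$ (obtained by setting $l=\beta-c$) equals $\binom{n}{\beta-c}$. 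Reindexing by $r := \beta - (n-k) + 1$ and $s := c+1$ in $\{1,\dots,m\}$ and applying $\binom{n}{\beta-c} = \binom{n}{n-(\beta-c)} = \binom{n}{k+s-r}$, this matrix coincides exactly with $T_{n,m,k}$.

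By Proposition~\ref{two}, $A$ satisfies the SLP, so the subset of $A_1 \cong \mathbb{K}^2$ consisting of strong Lefschetz elements is a nonempty Zariski-open set. As in the argument of Proposition~\ref{mon}, the diagonal torus $(\mathbb{K}^*)^2$ acts on $A$ by graded $\mathbb{K}$-algebra automorphisms (since the defining ideal is monomial), and conjugation by such an automorphism preserves the rank of $\times \ell^n$; hence the set of strong Lefschetz elements is torus-invariant. A proper torus-invariant closed subset of $\mathbb{K}^2$ is contained in $V(x) \cup V(y)$, so the open set of strong Lefschetz elements must contain $\ell = x+y$. Consequently $\times (x+y)^n \colon A_{m-1} \to A_{m-1+n}$ has maximal rank, and since source and target both have dimension $m$, it is a bijection. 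Its matrix $T_{n,m,k}$ is therefore invertible.

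The main obstacle is the combinatorial bookkeeping: tracking the monomial bases through the shift $r = \beta-(n-k)+1$, $s = c+1$ and invoking $\binom{n}{l} = \binom{n}{n-l}$ at the right moment so as to identify the multiplication matrix with $T_{n,m,k}$ itself (rather than its transpose or a reflection). The remaining steps — the two dimension counts $\dim A_{m-1} = \dim A_{m-1+n} = m$, and the torus argument producing $x+y$ as a strong Lefschetz element — are routine once one has Proposition~\ref{two} in hand.
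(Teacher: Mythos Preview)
Your proof is correct and follows essentially the same approach as the paper: realize $T_{n,m,k}$ as the matrix of $\times(x+y)^n$ on a two-variable monomial complete intersection and invoke the SLP in two variables (Proposition~\ref{two}) together with the torus argument behind Proposition~\ref{mon}. The only cosmetic difference is that the paper uses the ideal $(x^{m+n-k},y^{m+k})$ (with the roles of $x$ and $y$ swapped relative to yours), and you spell out the torus-invariance argument for the SLP a bit more explicitly than the paper does.
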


\begin{proof}
Consider ideal $I= (x^{m+n-k}, y^{m+k+1})$ in the polynomial ring $S=\mathbb{K}[x,y]$. Choose  monomial bases $\lbrace x^{m-i}y^{i-1}\rbrace_{i=1}^m$ and $\lbrace x^{m+n-k-j}y^{k+j-1}\rbrace _{j=1}^m$ for $k$-vector spaces $(S/I)_{m-1}$ and $(S/I)_{m+n-1}$, respectively. Observe that, $T_{n,m,k}$ is the matrix representing the multiplication map $\times(x+y)^n : (S/I)_{m-1}\rightarrow (S/I)_{m+n-1}$ with respect to the chosen monomial bases. Since by Proposition \ref{two}, any monomial $R$-algebra has the SLP, and by Proposition \ref{mon}, $x+y$ is a Lefschetz element for $S/I$, the multiplication map by $x+y$ is bijection and therefore Toeplitz matrix $T_{n,m,k}$ has nonzero determinant and therefore it is invertible.
\end{proof}
\begin{rem}
In \cite{14} there is a more general result about these Toeplitz matrices using the same technique as the proof of Lemma \ref{Toeplitz, lemma}.
\end{rem}
\begin{dfn}
Let $M= \lbrace m_1,\dots,m_r\rbrace$ be a set of monomials in $S=\mathbb{K}[x_1,\dots,x_n]$ of degree $d$. We say $M$ is a \textit{line segment} with respect to $(x_i,x_j)$ if
\begin{itemize}
\item[(1)] $ x_ix_j\vert m_t,  \hspace*{0.2 cm}\forall  \hspace*{0.2 cm}1\leq t\leq r$, 
\item[(2)]  $(x_j/x_i) m_t= m_{t+1}, \hspace*{0.2 cm} \forall\hspace*{0.2 cm} 1\leq t\leq r-1$.
\end{itemize}
In addition, for a monomial ideal $I\subset S$ generated in degree $d$ we say $M$ is a $S/I$-\textit{maximal line segment} with respect to $(x_i,x_j)$ if in addition we have $(x_i/x_j)m_1, (x_j/x_i)m_r\in I$.

\end{dfn}
\begin{lem}\label{MaximalSegment}
Let $M= \lbrace m_1,\dots,m_r\rbrace$ be a set of monomials of degree $d$ in $S=\mathbb{K}[x_1,\dots,x_n]$ which form a line segment w.r.t $(x_i,x_j)$ and let $J_M$ be the ideal generated by all the monomials in $S_d\setminus M$. If $x_i^ax_j^b\vert m_t$, for every $1\leq t\leq r$, then multiplication map $\times(x_i+x_j)^{a+b}: (S/J_M)_{k-a-b}\rightarrow (S/J_M)_{k}$ has maximal rank for every $k$.
\end{lem}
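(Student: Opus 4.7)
Parameterize the line segment as
\[
m_t = u\cdot x_i^{A-t+1} x_j^{B+t-1}, \qquad 1\le t\le r,
\]
where $u$ is a monomial in the variables other than $x_i, x_j$. The hypotheses $x_ix_j\mid m_t$ and $x_i^ax_j^b\mid m_t$ for every $t$ translate into $A\ge a+r-1$ and $B\ge b$, so in particular $a+b\le A+B-r+1$. Let $T = \mathbb{K}[x_i, x_j]$ and $\widetilde{J} = (x_i^{A+1}, x_j^{B+r})\subset T$, and fix the $\mathbb{K}$-linear isomorphism
\[
\psi\colon (T/\widetilde{J})_{A+B} \xrightarrow{\;\cong\;} (S/J_M)_d, \qquad x_i^{A-t+1}x_j^{B+t-1} \longmapsto m_t;
\]
both sides are $r$-dimensional.

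The central step is to compare the two multiplication-by-$(x_i+x_j)^{a+b}$ maps
\[
\mu_S\colon (S/J_M)_{d-a-b}\to (S/J_M)_d, \qquad \mu_T\colon (T/\widetilde{J})_{A+B-a-b}\to (T/\widetilde{J})_{A+B}.
\]
Each monomial of $S_{d-a-b} = (S/J_M)_{d-a-b}$ has the form $w\cdot x_i^{p'}x_j^{q'}$ with $w$ free of $x_i, x_j$. Expanding $(x_i+x_j)^{a+b}\cdot w x_i^{p'}x_j^{q'}$ and reducing modulo $J_M$, one checks: \emph{(a)} if $w\ne u$, then every term of the expansion has $(x_i,x_j)$-free part $w\ne u$, so no term lies in $M$, and the image is $0$; \emph{(b)} if $w = u$, the terms surviving reduction modulo $J_M$ correspond, via $\psi$ and with identical binomial coefficients, to the terms surviving reduction modulo $\widetilde{J}$ in the analogous expansion inside $T$. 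Hence $\psi^{-1}(\operatorname{image}(\mu_S)) = \operatorname{image}(\mu_T)$, and $\mu_S$ is surjective iff $\mu_T$ is.

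By Propositions~\ref{two} and~\ref{mon}, $x_i+x_j$ is a strong Lefschetz element of $T/\widetilde{J}$, so $\mu_T$ has maximal rank. Moreover, $T/\widetilde{J}$ is a complete intersection, hence Gorenstein, so its Hilbert function $h$ is symmetric: $h(k) = h(A+B+r-1-k)$; in particular $h(A+B) = h(r-1) = r$. Combined with unimodality (which follows from the SLP), this yields $h(k) \ge r$ for every $k\in [r-1, A+B]$. Since $a+b \le A+B-r+1$ implies $A+B-a-b\in [r-1, A+B]$, we get $h(A+B-a-b)\ge r = h(A+B)$, so $\mu_T$ is surjective. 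Consequently $\mu_S$ is surjective.

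Finally, $H_{S/J_M}(d) = r$ and $H_{S/J_M}(d-a-b) = \binom{n+d-a-b-1}{n-1} \ge r$ (using $d-a-b \ge r-1$, which follows from $\deg u \ge 0$ and $a+b\le A+B-r+1$), so Remark~\ref{Surjective} upgrades the surjectivity of $\mu_S$ at $k = d$ to maximal rank of $\times(x_i+x_j)^{a+b}\colon (S/J_M)_{k-a-b}\to (S/J_M)_k$ for every integer $k$. The main obstacle is the term-by-term bookkeeping in step (b): one must verify that the $S$-side constraint ``$m_t\in M$ with $1\le t\le r$'' corresponds precisely to the $T$-side truncation rules $p\le A$ and $q\le B+r-1$ defining $\widetilde{J}$, so that the two reductions agree on the nose under $\psi$.
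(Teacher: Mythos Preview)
Your argument is correct, and it is close in spirit to the paper's proof, but the packaging differs enough to be worth a comment.

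Both the paper and you reduce, via Remark~\ref{Surjective}, to showing surjectivity of $\times(x_i+x_j)^{a+b}$ in degree $d$, and both ultimately rest on the SLP in two variables (Proposition~\ref{two}). The paper proceeds more directly: it simply sets $f_t := m_t/(x_i^a x_j^b)$, restricts the multiplication map to the span of $f_1,\dots,f_r$, and observes that the resulting $r\times r$ matrix in the bases $\{f_t\}$ and $\{m_t\}$ is exactly the Toeplitz matrix $T_{a+b,r,a}$. Lemma~\ref{Toeplitz, lemma} then gives invertibility immediately, and surjectivity follows.

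Your route replaces this explicit matrix identification by a comparison with an auxiliary two-variable complete intersection $T/\widetilde J=(x_i^{A+1},x_j^{B+r})$, together with a Hilbert-function/unimodality argument to force surjectivity of $\mu_T$. This is fine, and the bookkeeping you flag in step~(b) does go through (the constraints $A-r+1\le p\le A$ on the $S$-side coincide with $p\le A$ and $q\le B+r-1$ on the $T$-side since $p+q=A+B$). In effect, though, you are re-deriving inline the special case of Lemma~\ref{Toeplitz, lemma} that the paper has already isolated: your $\mu_T$ between the two $r$-dimensional spaces is represented by the same Toeplitz matrix. So your argument is a valid and slightly more structural repackaging; the paper's version is shorter because it has factored the two-variable input into Lemma~\ref{Toeplitz, lemma} beforehand.
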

\begin{proof}
Without loss of generality assume $i=1$ and $j=2$. Since $H_{S/J_M}(d) \leq H_{S/J_M}(d-a-b)$ and $J_M$ is generated in degree $d$, by Remark \ref{Surjective} it is suffices to show that the map $\times (x_1+x_2)^{a+b}: (S/J_M)_{d-a-b}\rightarrow (S/J_M)_{d}$ is surjective.  Set $f_i:= m_i/x_1^ax_2^b$ and $\varphi$ the restriction of multiplication map $\times (x_1+x_2)^{a+b}:(S/J_M)_{d-a-b}\rightarrow (S/J_M)_{d}$ to $f_1,\dots,f_r$. Observe that the Toeplitz matrix $T_{a+b,r,a}$ is the matrix representing $\varphi$. By Lemma \ref{Toeplitz, lemma} this matrix is invertible so we can find preimage of each $m_i$ which means $\times (x_1+x_2)^{a+b}: (S/J_M)_{d-a-b}\rightarrow (S/J_M)_{d}$ is surjective.
\end{proof}

Using Lemma \ref{Toeplitz, lemma} and Lemma \ref{MaximalSegment} we prove that the multiplication map by a power of a linear form on an specific class of artinian monomial algebra has maximal rank in every degree.

\begin{thm}\label{xy, full rank}
Let $I\subset S=\mathbb{K}[x_1,\dots ,x_n]$ be an artinian monomial ideal generated in degree $d$. If there exist integers $1\leq i < j\leq n$ such that for every monomial $m\in (S/I)_d$,  $x_i^ax_j^b\vert m$ for some $a,b\geq 0$, then the multiplication map $\times(x_i+x_j)^{a+b}: (S/I)_{k-a-b}\rightarrow (S/I)_{k}$ has maximal rank for every $k$.
\end{thm}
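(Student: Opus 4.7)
The plan is to reduce the full-rank claim across all degrees to surjectivity at the single critical degree $d$ via Remark~\ref{Surjective}, and then to further reduce that $n$-variable surjectivity to a statement about Artinian quotients of $\mathbb{K}[x_i,x_j]$ in two variables, where Propositions~\ref{two} and~\ref{mon} apply.

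First, the hypothesis yields the monomial-level injection $m\mapsto m/(x_i^a x_j^b)$ from $\mathrm{Mon}(S/I)_d$ into $\mathrm{Mon}(S/I)_{d-a-b}$ (well-defined since $I$ has no generators in degree below $d$, so the quotient monomial is not in $I$). Hence $H_{S/I}(d)\leq H_{S/I}(d-a-b)$, and by Remark~\ref{Surjective} it suffices to prove that $\times(x_i+x_j)^{a+b}:(S/I)_{d-a-b}\to(S/I)_d$ is surjective.

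Next, factor each monomial uniquely as $m=u\cdot x_i^{\alpha}x_j^{\beta}$ where $u$ is a monomial in the variables other than $x_i,x_j$. This decomposes each graded piece of $S/I$ into ``$u$-blocks,'' and since $(x_i+x_j)^{a+b}$ acts trivially on $u$, it suffices to prove surjectivity of the map restricted to each $u$-block. For fixed $u$, define
\[
J_u:=\{\,x_i^{\alpha}x_j^{\beta}\in\mathbb{K}[x_i,x_j]:u\cdot x_i^{\alpha}x_j^{\beta}\in I\,\}.
\]
Then $J_u$ is a monomial ideal of $\mathbb{K}[x_i,x_j]$ which is Artinian because $I$ is Artinian in $S$, and via $u\cdot g\leftrightarrow g$ the $u$-block of $(S/I)_{k}$ is identified with $(\mathbb{K}[x_i,x_j]/J_u)_{k-\deg u}$; this identification intertwines multiplication by $(x_i+x_j)^{a+b}$. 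By Proposition~\ref{two} together with Proposition~\ref{mon}, this transported map has maximal rank.

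The main obstacle---and the point at which the theorem's hypothesis is essential---is upgrading ``maximal rank'' to ``surjective.'' The hypothesis forces every monomial $u\cdot x_i^{\alpha}x_j^{\beta}$ of degree $d$ with $\alpha<a$ or $\beta<b$ to lie in $I$, hence in $J_u$; so every basis monomial of $(\mathbb{K}[x_i,x_j]/J_u)_{d-\deg u}$ satisfies $\alpha\geq a$ and $\beta\geq b$. The assignment $x_i^{\alpha}x_j^{\beta}\mapsto x_i^{\alpha-a}x_j^{\beta-b}$ then defines an injection of $(\mathbb{K}[x_i,x_j]/J_u)_{d-\deg u}$ into $(\mathbb{K}[x_i,x_j]/J_u)_{d-a-b-\deg u}$, forcing the target dimension to be at most the source dimension. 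Maximal rank therefore entails surjectivity in each $u$-block; summing over $u$ gives surjectivity of the original multiplication map, and a final application of Remark~\ref{Surjective} upgrades this to maximal rank in every degree $k$.
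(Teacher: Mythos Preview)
Your proof is correct and uses the same block decomposition as the paper: both fix the exponents on the variables other than $x_i,x_j$ (your $u$, the paper's $(n-2)$-tuple $\mathbf{a}$) and argue surjectivity blockwise, then invoke Remark~\ref{Surjective}. The difference lies in how each block is handled. The paper further slices each block $\mathcal{M}_{\mathbf{a}}$ into $S/I$-maximal line segments and applies Lemma~\ref{MaximalSegment}, which in turn rests on the explicit invertibility of the Toeplitz matrices $T_{a+b,r,a}$ established in Lemma~\ref{Toeplitz, lemma}. You instead identify the entire $u$-block at once with an artinian monomial quotient $\mathbb{K}[x_i,x_j]/J_u$ and appeal directly to Propositions~\ref{two} and~\ref{mon}. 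Since the paper's Toeplitz lemma is itself proved by reducing to the two-variable SLP, your route is a genuine shortcut: it absorbs the line-segment bookkeeping into the ideal $J_u$ and avoids the explicit matrix entirely. The paper's approach, on the other hand, is more constructive---one can read off explicit preimages from the Toeplitz inverse---which may be useful if one wants effective bounds or explicit Lefschetz elements.
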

\begin{proof}
Note that by Remark \ref{Surjective} it is enough to show that $\times (x_i+x_j)^{a+b}: (S/I)_{d-a-b}\rightarrow (S/I)_{d}$ is surjective. Without loss of generality we assume that $i=1$ and $j=2$. For an $n-2$-tuple $\textbf{a}:= (a_3, \dots a_n)\in (\mathbb{N}\cup \lbrace 0\rbrace)^{n-2}$ define 
$$
\mathcal{M}_{\textbf{a}}:= \lbrace x_1^{a_1}x_2^{a_2}x_3^{a_3}\dots x_n^{a_n}\in (S/I)_d\mid a_1,a_2 \geq 0 \rbrace.
$$
We will show that  $\mathcal{M}_{\textbf{a}}$ is in the image of $\times(x_1+x_2)^{a+b}:(S/I)_{d-a-b}\rightarrow (S/I)_{d}$ for every $\textbf{a}$. For a fixed $n-2$-tuple $\textbf{a}$,  $\mathcal{M}_{\textbf{a}}$ may contains different $S/I$-maximal line segments w.r.t $(x_1,x_2)$ by Lemma \ref{MaximalSegment} each of them  is in the image. By the procedure in the proof of Lemma \ref{MaximalSegment}, the  preimages of the elements in $\mathcal{M}_\mathfrak{a}$ are all distinct and this completes the proof.

\end{proof}
As a consequence of the above result and Proposition \ref{cobasis almost linear} we prove Conjecture \ref{conjecture} holds for monomial ideal $I\subset S$ with almost linear resolution.
\begin{thm}\label{ConjectureTrue}
Let $I\subset S=\mathbb{K}[x_1,\dots ,x_n]$ be an artinian monomial ideal generated in degree $d$ with almost linear resolution, then Conjecture \ref{conjecture} holds.
\end{thm}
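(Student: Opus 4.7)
The plan is to combine Proposition \ref{cobasis almost linear} with Theorem \ref{xy, full rank} to produce a specific linear form $\ell_0$ for which $\mathfrak{m}^d \subseteq I + (\ell_0)^2$, and then extend this to generic $\ell$ by a semi-continuity argument on the rank of multiplication by $\ell^2$. Since almost linear resolution corresponds to $p = n$ in the statement of Conjecture \ref{conjecture}, the target inclusion reads $\mathfrak{m}^d \subseteq I + (\ell)^2$, which is equivalent to surjectivity of $\times \ell^2 \colon (S/I)_{d-2} \to (S/I)_d$.

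First, I would apply Proposition \ref{cobasis almost linear} with $r = n-1$. It forces every monomial $m \in (S/I)_d$ to satisfy $|\supp(m)| \geq n$, hence $\supp(m) = \{x_1, \dots, x_n\}$, so every variable---and in particular the product $x_1 x_2$---divides every such $m$. This is precisely the hypothesis of Theorem \ref{xy, full rank} with $i = 1$, $j = 2$, $a = b = 1$, yielding that $\times (x_1+x_2)^2 \colon (S/I)_{k-2} \to (S/I)_k$ has maximal rank for every $k$. The assignment $m \mapsto m/(x_1 x_2)$ gives an injection from the monomial basis of $(S/I)_d$ into $S_{d-2}$, and $S_{d-2} = (S/I)_{d-2}$ since $I$ is generated in degree $d$; hence $H_{S/I}(d) \leq H_{S/I}(d-2)$. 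Combining with Remark \ref{Surjective}, maximal rank in degree $d$ is surjectivity, so $\mathfrak{m}^d \subseteq I + (x_1+x_2)^2$.

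Finally, I would promote this to the form stated in Conjecture \ref{conjecture}: surjectivity of $\times \ell^2 \colon (S/I)_{d-2} \to (S/I)_d$ is a Zariski open condition on the coefficients of $\ell$, and since it holds for the specific $\ell_0 = x_1 + x_2$, the corresponding open locus is non-empty, hence dense. Consequently $\mathfrak{m}^d \subseteq I + (\ell)^2$ for a sufficiently general linear form $\ell$. The main conceptual weight has already been borne by the preceding machinery---the Toeplitz rank identity of Lemma \ref{Toeplitz, lemma} underlying Theorem \ref{xy, full rank}, and the support lower bound of Proposition \ref{cobasis almost linear}---so what remains here is to identify $a=b=1$ as the right specialization, verify $H_{S/I}(d) \leq H_{S/I}(d-2)$ via the divisibility bijection, and invoke openness of the maximal-rank locus; no step presents a real obstruction.
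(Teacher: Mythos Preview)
Your proof is correct and follows essentially the same approach as the paper: apply Proposition~\ref{cobasis almost linear} to force full support of every monomial in $(S/I)_d$, then invoke Theorem~\ref{xy, full rank} with $a=b=1$ to obtain surjectivity of $\times(x_i+x_j)^2$ in degree $d$. Your added care in verifying $H_{S/I}(d)\le H_{S/I}(d-2)$ and in passing from the specific form $x_1+x_2$ to a general $\ell$ via openness of the maximal-rank locus fills in steps the paper leaves implicit.
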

\begin{proof}
Proposition \ref{cobasis almost linear} implies that for all $m\in Mon(S/I)_d$ we have $\vert \supp(m)\vert \geq n$ therefore $x_1\cdots x_n\vert m$. By Theorem \ref{xy, full rank} the multiplication map
$$
\times (x_i+x_j)^2: (S/I)_{d-2}\rightarrow (S/I)_d
$$
is surjective for every $1\leq i<j\leq n$. This implies that $\mathfrak{m}^d\subset I +(x_i+x_j)^2$ for every $1\leq i< j\leq n$.
\end{proof}
Now in the last theorem of this section we prove that for a class of artinian monomial ideals the SLP is satisfied.
\begin{thm}\label{SLP}
Let $I\subset S=\mathbb{K}[x_1,\dots ,x_n]$ be an artinian monomial ideal generated in degree $d$. If there exist integers $1\leq i < j\leq n$ such that for every monomial $m\in S_d$, $x_ix_j\vert m$ is equivalent to 	 $m\notin I$. Then $S/I$ enjoys the SLP.
\end{thm}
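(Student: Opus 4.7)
The plan is to decompose $A := S/I$ as a direct sum of graded $\mathbb{K}[x_i, x_j]$-modules indexed by monomials in the remaining variables, each summand being a two-variable monomial complete intersection, and then combine the SLP of the summands (via Proposition \ref{two}) to obtain SLP for $A$ with Lefschetz element $x_i + x_j$.

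First I would make the hypothesis explicit. Since $I$ is generated in degree $d$ by those monomials of degree $d$ not divisible by $x_i x_j$, a direct calculation shows that for $k \geq d$ a monomial $x^a$ of degree $k$ lies outside $I$ if and only if $a_i \geq k - d + 1$ and $a_j \geq k - d + 1$. Writing $S' := \mathbb{K}[x_l : l \neq i, j]$, it follows that for each monomial $m_0 \in S'$ of degree $s \leq d - 1$, the $\mathbb{K}[x_i, x_j]$-submodule $A^{(m_0)} \subseteq A$ generated by the class of $m_0$ has monomial basis $\{x_i^u x_j^v m_0 : 0 \leq u, v \leq d - s - 1\}$. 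This yields a graded $\mathbb{K}[x_i, x_j]$-module isomorphism
\[
A^{(m_0)} \;\cong\; \bigl(\mathbb{K}[x_i, x_j] / (x_i^{d-s}, x_j^{d-s})\bigr)[-s],
\]
and since distinct tails $m_0$ give linearly independent monomials in $A$, I obtain a direct sum decomposition $A = \bigoplus_{m_0 \in S',\, \deg m_0 \leq d - 1} A^{(m_0)}$ of $\mathbb{K}[x_i, x_j]$-modules.

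Each summand is a monomial Artinian quotient of $\mathbb{K}[x_i, x_j]$ and so has the SLP by Proposition \ref{two}; the linear form $x_i + x_j$ serves as a strong Lefschetz element (this is Proposition \ref{mon} applied to the summand, whose torus argument applies verbatim to powers of a linear form as well). Crucially, the unshifted complete intersection $\mathbb{K}[x_i, x_j]/(x_i^{d-s}, x_j^{d-s})$ has Hilbert function peaking at degree $d - s - 1$, which after shift by $s$ becomes $d - 1$; thus the Hilbert function of every $A^{(m_0)}$ is symmetric about the common center $d - 1$. For any $k$ and $t$, on each summand the rank of $\times(x_i + x_j)^t \colon A^{(m_0)}_k \to A^{(m_0)}_{k+t}$ equals $\min(\dim A^{(m_0)}_k, \dim A^{(m_0)}_{k+t})$, and the common symmetry forces this minimum to be attained on the degree farther from $d - 1$ simultaneously for every $m_0$ (a condition depending only on $k$ and $t$). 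Summing over $m_0$ therefore gives $\min(\dim A_k, \dim A_{k+t})$, i.e., maximal rank for $\times(x_i + x_j)^t$ on $A$.

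The principal obstacle is the second step---verifying the monomial basis of each $A^{(m_0)}$ and checking that the $\mathbb{K}[x_i, x_j]$-action respects the decomposition. Once the decomposition and the common center of Hilbert-function symmetry at $d - 1$ are in hand, the compatibility of rank addition across summands is automatic, and SLP for $S/I$ with strong Lefschetz element $x_i + x_j$ follows at once.
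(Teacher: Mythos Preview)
Your proof is correct and follows essentially the same approach as the paper: both decompose $S/I$ along the ``tail'' monomials in the variables other than $x_i,x_j$ (the paper phrases this as a bigrading $\deg x_i=\deg x_j=(1,0)$, $\deg x_l=(0,1)$ otherwise), identify each piece with a shifted two-variable complete intersection $\mathbb{K}[x_i,x_j]/(x_i^{d-s},x_j^{d-s})$, invoke Proposition~\ref{two}, and then use the common center of symmetry at total degree $d-1$ to conclude that injectivity/surjectivity of $\times(x_i+x_j)^t$ is uniform across all summands. Your explicit identification of $x_i+x_j$ as the Lefschetz element and your verification of the monomial basis of each $A^{(m_0)}$ are slightly more detailed than the paper's presentation, but the argument is the same.
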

\begin{proof}
If $n=2$, by Proposition \ref{two} every artinian ideal $I$ has the SLP.  Let $n\geq 3$.
Without loss of generality, assume $i=1$ and $j=2$. Consider bigrading  $\deg(x_1)=\deg(x_2)=(1,0)$ and $\deg(x_i)=(0,1)$ for $3\leq i\leq n$ on $S$. By the assumption, if $b\geq  d$ we have $(S/I)_{(\ast , b)}\cong 0$. For every $b< d$ module $(S/I)_{(\ast,b)}$ is isomorphic to some copies of $ (k[x_1,x_2]/(x^{d-b}_1,x^{d-b}_2))_{\ast}$. Since every artinian algebra in two variables has the SLP, for all $a,b,c$ multiplication map $\times \ell^c: (S/I)_{(a-c,b)}\rightarrow (S/I)_{(a,b)}$ has maximal rank for a generic linear form $\ell$. 

For completing the proof it is sufficient to show that if $a+b=a'+b'$ then $\times \ell^c: (S/I)_{(a-c,b)}\rightarrow (S/I)_{(a,b)}$ is injective (respectively, surjective ) if and only if $\times \ell^c: (S/I)_{(a'-c,b')}\allowbreak \rightarrow (S/I)_{(a',b')}$  is injective (respectively, surjective). Since $(S/I)_{(\ast , b)}$ is a complete intersection artinian algebra, its Hilbert function (as a sequence) is symmetric and the maximum value obtained in the bidegree $(d-b-1,b)$. 
Now we have equivalent conditions:
\begin{align*}
&\times \ell^c: (S/I)_{(a-c,b)}\rightarrow (S/I)_{(a,b)} \hspace*{0.4 cm}\text{is}\hspace*{0.2 cm}\text{injective}\\
\Leftrightarrow &\vert (d-b-1)-(a-c)\vert \geq \vert (d-b-1)-a\vert \\
\Leftrightarrow &\vert (d-b'-1)-(a^\prime -c)\vert \geq \vert (d-b^\prime -1)-a^\prime\vert\\
\Leftrightarrow & \times \ell^c: (S/I)_{(a'-c,b')}\rightarrow (S/I)_{(a',b')} \hspace*{0.4 cm} \text{is}\hspace*{0.2 cm} \text{injective}.
\end{align*}
Similar argument works for surjectivity.
\end{proof}
We end this section by stating a conjecture that we have observed it  experimentally  in a large number of cases using Macaulay2 software \cite{M2}.
\begin{conj}\label{our conj}
Let $I\subset S=\mathbb{K}[x_1,\dots ,x_n]$ and $I\subset S$ be an artinian 
 monomial ideal generated in degree $d$. If for every monomial $m\in (S/I)_d$ we have  $x_1^{a_1}x_2^{a_2}\cdots x_n^{a_n}\vert m$, then the multiplication map $\times(\ell)^{a}: (S/I)_{k-a}\rightarrow (S/I)_{k}$ has maximal rank for every $k$, where $\ell=x_1+\cdots + x_n$ and $a= a_1+ \cdots +a_n$.
\end{conj}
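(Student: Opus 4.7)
The plan is a three-step reduction: first pass from general $k$ to a single critical degree, then from a general $I$ to a canonical ``extremal'' monomial ideal, and finally establish the invertibility of a multinomial matrix attached to that canonical ideal.

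Step 1 (reduction to surjectivity at degree $d$). Set $u := x_1^{a_1}\cdots x_n^{a_n}$. By Remark \ref{Surjective} it suffices to verify $H_{S/I}(d)\leq H_{S/I}(d-a)$ and then to show $\times\ell^a\colon (S/I)_{d-a}\to (S/I)_d$ is surjective. The Hilbert-function inequality is immediate: the rule $m\mapsto m/u$ is well defined on the monomial basis of $(S/I)_d$ by the divisibility hypothesis, and it lands injectively in the monomial basis of $(S/I)_{d-a}=S_{d-a}$, because $m/u\in I$ would force $m=u\cdot(m/u)\in I$.

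Step 2 (reduction to $I=J_u$). Let $J_u\subset S$ be the monomial ideal generated by the degree-$d$ monomials not divisible by $u$. The hypothesis says exactly that $J_u\subseteq I$, giving a graded surjection $\pi\colon S/J_u\twoheadrightarrow S/I$. Since both $I$ and $J_u$ are generated in degree $d$, one has $(S/J_u)_{d-a}=(S/I)_{d-a}=S_{d-a}$, and $\pi$ intertwines the two $\times\ell^a$ maps. Thus surjectivity of $\times\ell^a\colon S_{d-a}\to (S/J_u)_d$ forces surjectivity of $\times\ell^a$ on $S/I$, and I may assume $I=J_u$. Both sides of the resulting map have dimension $\binom{d-a+n-1}{n-1}$ (the target via the bijection $m\leftrightarrow m/u$), so the task is to show that the square matrix with entries given by the multinomial coefficients $\binom{a}{b-c+a}$ (understood as $0$ when any coordinate is negative) is invertible. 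This matrix is the natural $n$-dimensional generalization of the Toeplitz matrix $T_{n,m,k}$ of Definition \ref{Toeplitz definition}; for $n=2$, $J_u=(x_1^{d-a_2+1},x_2^{d-a_1+1})$ is a complete intersection and Lemma \ref{Toeplitz, lemma} (ultimately Proposition \ref{two}) yields invertibility at once.

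The hard part is this invertibility for $n\geq 3$, where $S/J_u$ is in general not a complete intersection and the classical strong-Lefschetz theorem for complete intersections does not apply to $S/J_u$ directly. My preferred line of attack is to exhibit the multinomial matrix as the matrix of $\times\ell^a$ acting between two graded pieces of a known-SLP algebra and then to trace the maximal-rank witness through a canonical surjection onto $S/J_u$: the complete intersection $S/(x_1^{d-a+a_1+1},\dots,x_n^{d-a+a_n+1})$ has the SLP with $\ell=x_1+\cdots+x_n$ by the classical theorem of Stanley--Watanabe, and it agrees with $S/J_u$ precisely when $n=2$, so it is the natural ``envelope'' to try. If this descent cannot be carried out directly, the fallback is an induction on $n$ exploiting the bigrading technique from the proof of Theorem \ref{SLP}: split the variables into two groups, expand $\ell^a$ by the binomial theorem, and analyze the pieces via Theorem \ref{xy, full rank} and the lower-dimensional instance of the conjecture. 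Making maximal rank descend cleanly onto $S/J_u$ for every $(a_1,\dots,a_n)$ is the crux I expect to require the most work.
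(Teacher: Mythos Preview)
The statement you are attempting to prove is \emph{Conjecture}~\ref{our conj}: the paper does not give a proof, only reports experimental evidence via Macaulay2. There is therefore no argument in the paper to compare your proposal against.

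That said, your Steps~1 and~2 are clean and correct. The reduction to surjectivity in degree $d$ via Remark~\ref{Surjective} is valid once you observe $m\mapsto m/u$ injects the monomial basis of $(S/I)_d$ into $S_{d-a}$, and the further reduction to the extremal ideal $J_u$ is also correct. These reductions do reduce the conjecture to a single concrete linear-algebra question: the invertibility of the multinomial matrix you describe.

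The gap is in Step~3, and it is not a matter of missing detail but of the proposed mechanism failing. Your ``envelope'' $A=S/(x_1^{d-a+a_1+1},\dots,x_n^{d-a+a_n+1})$ does surject onto $S/J_u$ in degree $d$, and $A_{d-a}=(S/J_u)_{d-a}=S_{d-a}$, but for $n\ge 3$ one has in general $\dim A_d>\dim(S/J_u)_d$ (e.g.\ $n=3$, $a_1=a_2=a_3=1$, $d=4$ gives $\dim A_4=6$ versus $\dim(S/J_u)_4=3$). The SLP of $A$ then yields only \emph{injectivity} of $\times\ell^a\colon A_{d-a}\to A_d$, and injectivity does not descend along the quotient $A_d\twoheadrightarrow(S/J_u)_d$: a nonzero $f\in S_{d-a}$ can have $\ell^a f$ land in the kernel of that quotient without $\ell^a f$ being zero in $A_d$. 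So the Stanley--Watanabe input is on the wrong side to push through. Your fallback bigrading/induction idea is not worked out either; the obstacle there is that expanding $\ell^a$ binomially mixes graded pieces in a way that does not reduce to a direct sum of lower-dimensional instances of the same problem. In short, the crux you flag really is the whole difficulty, and neither of your sketched routes closes it; the statement remains, as the paper presents it, open.
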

If Conjecture \ref{our conj} is true by combining with \ref{cobasis almost linear} we get that if $I$ is a monomial ideal generated in a single degree $d$ with almost linear resolution then $\mathfrak{m}^d\subset I+(\ell)^n$ for a sufficiently general linear form $\ell$.
\section{Lefschetz properties via studying Macaulay inverse systems }\label{section4}
In this section we study the inverse system module $I^{-1}$ for monomial ideals in $S$ generated in degree $d$ and prove some results about the number of generators of $I$ satisfying the WLP. Also we study the artinian monomial ideals $I\subset S=\mathbb{K}[x_1,\dots ,x_n]$ generated in degree $d$ where the minimal free resolution of $S/I$ is linear for $n-2$ steps and we prove that if $\reg(S/I)=d$ then $S/I$ satisfies the WLP.

\begin{dfn}
In a polynomial ring $S=\mathbb{K}[x_1,\dots , x_n]$, for any monomial $m$ and variable $x_i$, define $$\deg_i(m):= max \lbrace e \mid {x^e_i}\vert m \rbrace .$$
\end{dfn}
\begin{prop}\label{proposition differential linear }
Let $I\subset S=k[x_1,\dots , x_n]$ be a monomial ideal of $S$ generated in a single degree $d$ and homogeneous form $F=\sum_{m\in (I^{-1})_d} a_m m\in (I^{-1})_d$  such that $ (x_1+\cdots +x_n)\circ F=0 $. If $a_m\neq 0$ and $y_i \vert  m$, then for all $0\leq j< \deg_i(m)$ there exists a monomial $m_{i,j}\in (I^{-1})_d$ with $\deg_i(m_{i,j})=j$ such that $a_{m_{i,j}}\neq 0$.
\end{prop}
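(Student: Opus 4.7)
The plan is to exploit the vanishing of $(x_1+\cdots+x_n)\circ F$ by decomposing $F$ according to its degree in the single variable $y_i$. Write
$$F \;=\; \sum_{e=0}^{d} y_i^{e}\, G_e,$$
where each $G_e$ is a homogeneous polynomial of degree $d-e$ in the remaining variables $y_1,\dots,\widehat{y_i},\dots,y_n$. I would then split the action of $\ell := x_1+\cdots+x_n$ into the contribution of $x_i$ and of the other $x_j$. Since $x_i$ acts as $\partial/\partial y_i$, it lowers the $y_i$-exponent and contributes $(e+1)\,G_{e+1}$ to the coefficient of $y_i^{e}$, while each $x_j$ with $j\neq i$ preserves the $y_i$-exponent and contributes $x_j\circ G_e$. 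Equating the coefficient of $y_i^{e}$ in $\ell \circ F = 0$ yields, for every $e\geq 0$, the recursion
$$(e+1)\, G_{e+1} \;+\; \sum_{j\neq i} x_j\circ G_e \;=\; 0.$$

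Because $\mathbb{K}$ has characteristic zero, $e+1$ is invertible, so the recursion expresses $G_{e+1}$ as an explicit linear combination of partial derivatives of $G_e$. Iterating gives the key monotonicity: if $G_{e_0}=0$ for some $e_0$, then $G_{e}=0$ for all $e\geq e_0$; equivalently, if $G_{e^*}\neq 0$ then $G_j\neq 0$ for every $0\leq j \leq e^*$.

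To conclude, apply this with $e^*=\deg_i(m)$. Writing $m = y_i^{e^*} m_0$ with $m_0$ not involving $y_i$, the hypothesis $a_m\neq 0$ says that $m_0$ appears in $G_{e^*}$ with nonzero coefficient, so $G_{e^*}\neq 0$, and hence $G_j\neq 0$ for every $0\leq j<\deg_i(m)$. Picking any monomial $m'$ occurring in $G_j$ with nonzero coefficient, the monomial $m_{i,j} := y_i^{j} m'$ has degree $d$, satisfies $\deg_i(m_{i,j}) = j$, and appears in $F$ with nonzero coefficient. Since $I$ is monomial, $I^{-1}$ is spanned by monomials, so every monomial appearing in $F \in (I^{-1})_d$ automatically lies in $(I^{-1})_d$; in particular $m_{i,j}\in (I^{-1})_d$, as required. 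The argument is essentially formal once the $y_i$-graded decomposition is in place; the only delicate points are tracking the scalar $e+1$ in the recursion and invoking characteristic zero to invert it, so no serious obstacle arises.
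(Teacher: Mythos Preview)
Your argument is correct, and it takes a genuinely different route from the paper's. The paper argues monomial by monomial: starting from $m$ with $a_m\neq 0$ and $\deg_i(m)=b_i$, it observes that the term $a_m b_i\,(m/y_i)$ in $\ell\circ F$ must be cancelled, which forces some $m'=(y_k/y_i)m$ with $k\neq i$ to occur in $F$; this $m'$ has $\deg_i(m')=b_i-1$, and one repeats to walk down to every lower $y_i$-degree. Your approach instead grades $F$ by the exponent of $y_i$ and extracts the recursion $(e+1)G_{e+1}=-\sum_{j\neq i}x_j\circ G_e$, from which the downward propagation of nonvanishing follows at once. The paper's walk is slightly more hands-on and produces an explicit chain of monomials; your decomposition is cleaner and more structural, makes the use of characteristic zero explicit, and in fact yields the stronger statement that the whole $y_i$-graded piece $G_j$ is nonzero for every $j\leq\deg_i(m)$, not just that a single monomial survives.
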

\begin{proof}
Let $m= y_1^{b_1}\cdots y_n^{b_n}\in (I^{-1})_d$ and $a_m\neq 0$,
$$
(x_1+\cdots +x_n)\circ m = a_m \sum _{y_i\vert m} b_i \dfrac{m}{y_i}. \quad \text{}\quad  
$$
Since $(x_1+\cdots +x_n)\circ F=0$ for each $1\leq i\leq n$ where $y_i | m$  there exists a monomial $m^\prime\in (I^{-1})_d$ with nonzero coefficient in $F$ such that $\frac{m}{y_i}= \frac{m^\prime}{y_k}$ for some $ k\neq i$. Note that $\deg_i(m)=\deg_i(m^\prime)+1=b_i+1$ and define $m_{i,b_i-1} := m^\prime$.  If $b_i-1\neq 0$ then we can do the same and find $m_{i,b_i-2}$ in the support of $F$. The assertion follows by continuing this procedure to find distinct monomials $m_{i,b_i-3},\dots , m_{i,0}$ in the support of $F$.
\end{proof}

\begin{cor}
Let $I\subset S$ be a monomial ideal generated in degree $d$. If the multiplication map $\times \ell : (S/I)_{d-1}\rightarrow (S/I)_d$ fails to be surjective for every linear form $l$, then $H_{S/I}(d)\geq d+1$. In other word, if $H_{S/I}(d)\leq d$ then $S/I$ enjoys the WLP.
\end{cor}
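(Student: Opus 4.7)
The plan is to extract a witness via Macaulay duality and then bound its support from below. Setting $\ell := x_1 + \cdots + x_n$, the hypothesis ensures that $\times \ell : (S/I)_{d-1} \to (S/I)_d$ is not surjective, so by Macaulay duality the dual map $\circ \ell : (I^{-1})_d \to (I^{-1})_{d-1}$ is not injective; hence there exists a nonzero $F \in (I^{-1})_d$ with $\ell \circ F = 0$. Since $I$ is monomial, $(I^{-1})_d$ has a $\mathbb{K}$-basis of monomials, so $|\supp(F)| \leq \dim_\mathbb{K} (I^{-1})_d = H_{S/I}(d)$. It therefore suffices to prove that any nonzero $F \in R_d$ annihilated by $\ell$ satisfies $|\supp(F)| \geq d+1$.

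I would prove this sharper bound by induction on $d$. The base case $d = 1$ is immediate: writing $F = \sum_i a_i y_i$, the condition $\ell \circ F = \sum_i a_i = 0$ together with $F \neq 0$ forces at least two $a_i$ to be nonzero. For the inductive step with $d \geq 2$, pick any variable $y_i$ that divides some monomial in $\supp(F)$ (such a variable exists since $\deg F \geq 1$) and set $G := x_i \circ F \in R_{d-1}$. Then $G \neq 0$, and $\ell \circ G = x_i \circ (\ell \circ F) = 0$. The map $m \mapsto m/y_i$ is a bijection
$$\{m \in \supp(F) : \deg_i(m) \geq 1\} \longleftrightarrow \supp(G),$$
so by the inductive hypothesis there are at least $d$ monomials in $\supp(F)$ divisible by $y_i$. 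Applying Proposition \ref{proposition differential linear} with $j = 0$ to any $m_0 \in \supp(F)$ with $\deg_i(m_0) \geq 1$ produces an additional monomial $m^* \in \supp(F)$ with $\deg_i(m^*) = 0$, giving $|\supp(F)| \geq d+1$.

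The main obstacle is the careful bookkeeping of $\supp$ as one passes from $F$ to $G$. One must verify that $G$ genuinely satisfies the inductive hypotheses, that the map $m \mapsto m/y_i$ is a bijection (no two distinct monomials share the same quotient by $y_i$), and that Proposition \ref{proposition differential linear} supplies a monomial with $\deg_i = 0$ that is necessarily disjoint from the counted $d$ monomials. Once these verifications are in place, the induction closes cleanly and the Hilbert function bound follows.
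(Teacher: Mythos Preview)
Your proof is correct and takes a genuinely different route from the paper's. The paper argues directly: it fixes a single monomial $m = y_1^{b_1}\cdots y_n^{b_n} \in \supp(F)$ and applies Proposition~\ref{proposition differential linear } for every index $i$ and every $0 \le j < b_i$ at once, producing $b_1 + \cdots + b_n = d$ monomials different from $m$. (Distinctness across different indices $i$ is not spelled out, but it follows from the specific construction in the proof of that proposition, which yields $m_{i,j}$ with $\deg_k(m_{i,j}) \ge b_k$ for all $k \neq i$.) Your argument instead inducts on $d$: differentiating once gives $G = x_i \circ F \in R_{d-1}$, the bijection $m \mapsto m/y_i$ transports the count, and Proposition~\ref{proposition differential linear } is invoked only in the single case $j = 0$ to supply one extra monomial with $\deg_i = 0$. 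What your approach buys is that the separation into ``$d$ monomials divisible by $y_i$'' plus ``one monomial not divisible by $y_i$'' makes distinctness automatic, so the argument is self-contained and does not rely on unpacking how the $m_{i,j}$ are actually built; the paper's direct count is shorter but leans on that implicit verification.
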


\begin{proof}
Suppose the multiplication map $ \times (x_1+\cdots + x_n): (S/I)_{d-1}\rightarrow (S/I)_d$ is not surjective so by Macaulay duality there exists a non-zero form $F= \sum_{m\in (I^{-1})_d} a_m m\in(I^{-1})_d$ such that 
$(x_1+\cdots +x_n)\circ F=0$. 
Let $m= y_1^{b_1}\cdots y_n^{b_n}\in (I^{-1})_d $  be a monomial in the support of the form $F$, using Proposition \ref{proposition differential linear }, there are at least $\deg_1(m)+\cdots + \deg_n(m)= b_1+\cdots +b_n=d$ monomials different from $m$ with nonzero coefficients in  $F$. Therefore we have that $H_{S/I}(d)\geq d+1$.
\end{proof}
\begin{rem}
In \cite{14} the first author with Mats Boij provide a better bound for $H_{S/I}(d)$ when $I$ is an artinian monomial ideal in $S$ generated in degree $d$ and fails the WLP.
\end{rem}

\begin{dfn}
Let $I\subset S$ be an ideal, the \textit{socle} elements of $S/I $ is 
$$\soc(S/I)=\lbrace f\in S/I \mid \m f=0 \rbrace.$$
\end{dfn}
If $I\subset S$ is an artinian ideal with linear resolution it equals a power of maximal ideal  and therefore it satisfies the WLP trivially. On the other hand in Corollary \ref{almost wlp} we have seen that an artinian ideal $I\subset S$ with almost linear resolution satisfies the WLP. In the following result we determine whether an artinian monomial ideal $I\subset S$ has the WLP where the minimal free resolution of $S/I$ is linear for $n-2$. 
\begin{thm}\label{n-2 steps}
Let $I\subset S=\mathbb{K}[x_1,\dots ,x_n]$ be a monomial ideal generated in degree $d$ and $\mathfrak{m}^{d+1}\subset I$.  If the minimal free resolution of $S/I$ is linear for $n-2$ linear steps, then $S/I$ satisfies the WLP.
\end{thm}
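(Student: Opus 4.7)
The plan is first to reduce, via Proposition \ref{mon}, the WLP of $S/I$ to showing that $\ell = x_1 + \cdots + x_n$ is a Lefschetz element. Since $\mathfrak{m}^{d+1} \subset I$ and $I$ is generated in degree $d$, the multiplication map $\times \ell : (S/I)_j \to (S/I)_{j+1}$ is automatically of maximal rank at every $j \neq d-1$: for $j < d-1$ the source is the unrestricted $S_j$ and injectivity is immediate from $S$ being a domain, while for $j \geq d$ the target vanishes. So only the map $\times \ell : (S/I)_{d-1} \to (S/I)_d$ needs to be checked, and by Macaulay duality this is equivalent to injectivity of the dual $\ell \circ (-) : (I^{-1})_d \to (I^{-1})_{d-1}$.

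I would argue by contradiction: let $F = \sum_m a_m m \in (I^{-1})_d$ be a nonzero element with $\ell \circ F = 0$. Proposition \ref{cobasis almost linear} guarantees that every monomial in $\supp F$ has support of size at least $n-1$. Starting from any $m \in \supp F$ and iteratively applying Proposition \ref{proposition differential linear } to a variable $y_i$ with $\deg_i m \geq 1$, I obtain for each index $i \in \{1,\dots,n\}$ a monomial $m^{(i)} \in \supp F$ with $\deg_i m^{(i)} = 0$; by the cobasis bound $\supp m^{(i)}$ must equal $\{y_j : j \neq i\}$. Hence $\supp F$ meets each of the ``one-missing-variable'' classes $A_i := \{m : \supp m = \{y_j : j \neq i\}\}$.

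Next I would extract sharp linear relations from $\ell \circ F = 0$ by reading the coefficient of each $y^\beta$ of degree $d-1$: this yields $\sum_i (\beta_i + 1)\, a_{\beta + e_i} = 0$, where $a_\gamma := 0$ whenever $|\supp \gamma| < n-1$. Specializing to $\beta$ with $|\supp \beta| = n-2$, say $\supp \beta = \{1, \ldots, n\} \setminus \{i, j\}$, all contributions from indices in $\supp \beta$ vanish (because $|\supp (\beta + e_k)| = n-2$ for $k \in \supp \beta$), and the equation collapses to the clean sign-swap pairing $a_{\beta + e_i} = -\, a_{\beta + e_j}$ between a monomial in $A_j$ and its partner in $A_i$ obtained by exchanging one $y_i$ for one $y_j$. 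For three distinct indices $i, j, k$ and a monomial $m \in A_i \cap \supp F$ with $\deg_j m = \deg_k m = 1$, chaining three such pairings cyclically through $A_i \to A_k \to A_j \to A_i$ returns to $m$ with an accumulated sign of $(-1)^3$, forcing $a_m = -a_m = 0$; the pairings then propagate this vanishing across every coefficient of $F$ supported on $\bigcup_i A_i$. The analogous relations produced by $|\supp \beta| = n-1$ (which couple each $A_i$ with the full-support class $A_0$) and $|\supp \beta| = n$ then pin down the remaining coefficients in $A_0$, yielding $F = 0$.

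The main obstacle I anticipate is handling monomials $m \in \supp F$ in which no coordinate equals one, which can occur once $d \geq 2(n-1)$; in that range the three-cycle pairing is not immediately available, and one must leverage the hypothesis $\mathfrak{m}^{d+1} \subset I$ (equivalently $(I^{-1})_{d+1} = 0$) to ensure that the swap operation $m \mapsto m - e_k + e_l$ stays within the finite inverse system and to propagate vanishing across the decomposition $(I^{-1})_d = A_0 \oplus \bigoplus_i A_i$ via the more intricate relations coming from larger-support $\beta$'s. This is precisely the step where the two hypotheses of the theorem work in tandem, and where I expect the proof to require the most combinatorial care---the Togliatti example in three variables shows that without $\mathfrak{m}^{d+1} \subset I$ the conclusion can fail outright, so the finish must genuinely exploit that assumption.
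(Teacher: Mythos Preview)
Your reduction to injectivity of $\ell \circ (-): (I^{-1})_d \to (I^{-1})_{d-1}$ is correct, as is the use of Proposition~\ref{cobasis almost linear} to bound supports. However, the proposal has a genuine gap at precisely the point you flag as the ``main obstacle'', and the tools you have assembled are not enough to close it.

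The paper uses the $(n-2)$-linearity hypothesis \emph{twice}: once via Proposition~\ref{cobasis almost linear} (as you do), and a second time through an explicit syzygy construction that your coefficient-relation approach does not capture. Concretely: pick $m \in \supp F$ with $|\supp m| = n-1$, say $y_n \nmid m$ (such $m$ exists by Proposition~\ref{proposition differential linear }). The equation $\ell \circ F = 0$ forces a cancellation partner $m_1 \in \supp F$ with $m_1/y_i = m/y_1$ for some $i$. If $y_n \nmid m_1$, one writes down three linear first syzygies of $I^{-1}$ (suitably scaled combinations of $x_1 \circ e_m$, $x_i \circ e_{m_1}$, $x_n \circ e_m$, $x_n \circ e_{m_1}$) and then a linear relation among them; by duality this contradicts the $(n-2)$-linearity of the resolution of $S/I$. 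Hence $y_n \mid m_1$, which forces $i = n$ and $m_1 = m y_n / y_1 \notin I$. Repeating for every index $j$ gives $m y_n / y_j \notin I$ for all $j \leq n-1$; since also $m \notin I$, every degree-$d$ divisor of $x_n m$ lies outside $I$, so $x_n m \notin I$ --- contradicting $\mathfrak{m}^{d+1} \subset I$.

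Your sign-swap relations cannot replace the syzygy step. Already for $n = 3$ (where $(n-2)$-linearity is automatic) your three-cycle argument is essentially vacuous: a monomial in $A_i$ then has only two nonzero exponents, so ``two exponents equal to $1$'' forces $d = 2$. And the Togliatti kernel element $F = y_1^2 y_2 - y_1^2 y_3 - y_1 y_2^2 + y_2^2 y_3 + y_1 y_3^2 - y_2 y_3^2$ satisfies every sign-swap constraint you derived, so those constraints alone do not force $F = 0$. You are right that $\mathfrak{m}^{d+1} \subset I$ must intervene, but in the paper it enters only \emph{after} the syzygy argument has pinned down the cancellation partners; your proposal gives no concrete mechanism by which that hypothesis could act on the coefficient combinatorics directly.
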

\begin{proof}
Since $I$ has linear resolution for $n-2$ steps by Proposition \ref{cobasis almost linear}, for all $m\in \soc(S/I)$ we have $\vert\supp(m)\vert \geq n-1$. If for all $m\in \soc(S/I)$ we have $\vert\supp(m)\vert=n$ then clearly we have the WLP.  Suppose  that there exists $m\in \soc(S/I)$ such that $\vert\supp(m)\vert=n-1$.
Since $I$ is generated in degree $d$, to prove that $S/I$ has the WLP it is enough to  show that the multiplication map 
$$
\times (x_1+\cdots + x_n): (S/I)_{d-1}\rightarrow (S/I)_d
$$
is surjective, or equivalently the differentiation map 
$$
\circ (x_1+\cdots + x_n): (I^{-1})_d\rightarrow (I^{-1})_{d-1}
$$ is injective. Suppose not, so there exists a non-zero form $F= \sum_{m\in I^{-1}} a_m m$ such that 
\begin{equation}\label{partial}
(x_1+\cdots +x_n)\circ F=0 .
\end{equation}
Observe that, there exists a monomial $m\in I^{-1}$ with non-zero coefficient in $F$ such that  $\vert\supp(m)\vert=n-1$. Let $m= y^{b_1}_1, \dots , y^{b_{n-1}}_{n-1}$ hence
$$
(x_1+\cdots + x_n)\circ m= b_1\frac{ m }{ y_1}+ \cdots + b_{n-1}\frac{ m}{ y_{n-1}}.
$$
If Equation (\ref{partial}) holds there must exist $m_1\in I^{-1}$ and integer $1\leq i\leq n$ such that $\dfrac{m_1}{y_i}= \dfrac{m}{y_1}$. Suppose $y_n\nmid m_1$ then $\vert \supp(m_1)\vert =\vert \supp(m)\vert =n-1$. Let 
$$
F_1:=  (b_i+1)x_1\circ m - ( b_1)x_i\circ m_1,\hspace*{0.4 cm}  F_2 := x_n\circ m \hspace*{0.3 cm} \text{and}\hspace*{0.2 cm} F_3 :=  x_n\circ m_1
$$
are the linear first syzygies for the inverse system module. In addition, 
$$
x_n\circ F_1 - (b_i+1)x_1\circ F_2 + (b_1)x_i\circ F_3
$$
is a linear second syzygy for the inverse system module which contradicts the fact that the minimal free 
resolution of $I$ is linear for $n-2$ steps. Therefore, $y_n\mid m_1$ and since $y_n\nmid m$ we conclude that $i=n$.

By duality, if for $m\in \soc(S/I)$ we have $x_n\nmid m$, then there exist monomials  $m_1,\dots 
,m_{n-1}\in \soc(S/I)$ such that $\frac{m_i}{x_n}= \frac{m}{x_i}$ for all $1\leq i\leq n-1$. This implies that $x_nm\in \mathfrak{m}^{d+1}$ but  $x_nm\notin I$ which contradicts the assumption that
$\mathfrak{m}^{d+1}\subset I$. 
\end{proof}
Next example illustrates that the assumption, $\mathfrak{m}^{d+1}\subset I$ in Theorem \ref{n-2 steps} is necessary.
\begin{exmp}
The artinian monomial ideal $I=(x^3_1,x^3_2,x^3_3,x_1x_2x_3)$ in $S = \mathbb{K}[x_1,x_2,x_3]$ defines a \textit{Togliatti system} and therefore fails the WLP. Note that the minimal free resolution of $S/I$ is linear for $1$ step but $\m^4\nsubseteq I $.
\end{exmp}

\section{Acknowledgment}
We would like to thank Mats Boij who provided insight that greatly assisted this research. The first author was supported by the grant VR2013-4545. This work was done while the second author was visiting the Royal Institute of Technology (KTH) and he expresses his gratitude for  this hospitality.
\bibliographystyle{acm} 
\bibliography{bib.bib}

\begin{thebibliography}{1}

\bibitem{14}
{\sc Altafi, N., and Boij, M.}
\newblock {The Weak Lefschtez Property of Equigenerated Monomial Ideals}.
\newblock preprint, 2018.

\bibitem{EHU}
{\sc Eisenbud, D., Huneke, C., and Ulrich, B.}
\newblock The regularity of {T}or and graded {B}etti numbers.
\newblock {\em Amer. J. Math. 128}, 3 (2006), 573--605.

\bibitem{G}
{\sc Geramita, A.~V.}
\newblock Inverse systems of fat points: {W}aring's problem, secant varieties
  of {V}eronese varieties and parameter spaces for {G}orenstein ideals.
\newblock In {\em The {C}urves {S}eminar at {Q}ueen's, {V}ol.\ {X} ({K}ingston,
  {ON}, 1995)}, vol.~102 of {\em Queen's Papers in Pure and Appl. Math.}
  Queen's Univ., Kingston, ON, 1996, pp.~2--114.

\bibitem{M2}
{\sc Grayson, D.~R., and Stillman, M.~E.}
\newblock Macaulay2, a software system for research in algebraic geometry.
\newblock Available at \url{http://www.math.uiuc.edu/Macaulay2/}.

\bibitem{HMNW}
{\sc Harima, T., Migliore, J.~C., Nagel, U., and Watanabe, J.}
\newblock The weak and strong {L}efschetz properties for {A}rtinian
  {$K$}-algebras.
\newblock {\em J. Algebra 262}, 1 (2003), 99--126.

\bibitem{IK}
{\sc Iarrobino, A., and Kanev, V.}
\newblock {\em Power sums, {G}orenstein algebras, and determinantal loci},
  vol.~1721 of {\em Lecture Notes in Mathematics}.
\newblock Springer-Verlag, Berlin, 1999.
\newblock Appendix C by Iarrobino and Steven L. Kleiman.

\bibitem{MN}
{\sc Migliore, J., and Nagel, U.}
\newblock {The Lefschetz question for ideals generated by powers of linear
  forms in few variables}.

\bibitem{MMN}
{\sc Migliore, J.~C., Mir\'o-Roig, R.~M., and Nagel, U.}
\newblock Monomial ideals, almost complete intersections and the weak
  {L}efschetz property.
\newblock {\em Trans. Amer. Math. Soc. 363}, 1 (2011), 229--257.

\end{thebibliography}

\end{document}